\numberwithin{equation}{section}
\theoremstyle{plain} 
\newtheorem{thm}{Theorem}[section] 
\newtheorem{cor}[thm]{Corollary} 
\newtheorem{lem}[thm]{Lemma} 
\newtheorem{prop}[thm]{Proposition} 
\newtheorem{defn}[thm]{Definition}
\begin{document}
\author {Miriam Piccirillo 
\sc{}\thanks{Dipartimento di Matematica e Applicazioni "R. Caccioppoli", Università degli Studi di Napoli "Federico II", Via Cintia, 80126 Napoli, Italy. E-mail: \textit{miriam.piccirillo@unina.it}} 
  }

\title{Regularity for a strongly degenerate equation with explicit $u$-dependence}
\date{}
\maketitle

\begin{abstract}
We consider local weak solutions of widely degenerate elliptic PDEs of the type 
\begin{equation}
        \label{equazione mia}
        \mathrm{div}\Biggl(a(x)(|Du|-1)^{p-1}_+\frac{Du}{|Du|}\Biggr)=b(x,u) \ \ \text{ in }\Omega,
    \end{equation}
where $2\leq p<\infty,\textbf{ } \Omega$ is an open subset of $\mathbb{R}^n,n>2,$ and $( \ \cdot \ )_+$  stands for the positive part. We establish a higher differentiability result for the composition of the gradient with a suitable function that vanishes in the unit ball for the gradient, under suitable assumptions on the datum $b(x,u)$ and the coefficient $a(x).$ The novelty here with respect to previous papers on the subject is that the right hand side explicitly depends on the solution $u.$
\end{abstract}

\medskip
\noindent \textbf{Keywords:} - Widely degenerate problems, explicit $u$-dependence, higher differentiability
\medskip \\
\medskip
\noindent \textbf{MSC 2020:} - 35J70, 35J75.

\section{Introduction}
In this paper, we are interested in the regularity properties of the local weak solutions to strongly degenerate elliptic equation of the form
\begin{equation}
        \label{equazione mia}
        \mathrm{div}\Biggl(a(x)(|Du|-1)^{p-1}_+\frac{Du}{|Du|}\Biggr)=b(x,u) \ \ \text{ in }\Omega,
    \end{equation}
where $2\leq p<\infty$, $\Omega$ is an open subset of $\mathbb{R}^n,n>2,$ and $(\ \cdot \ )_+$  stands for the positive part. When dealing with equations as in \eqref{equazione mia}, one has to face two different major difficulties: the principal part of the equation is widely degenerate and the right hand side explicitly depends on the solution itself. Actually, the operator on the left hand side of \eqref{equazione mia} is uniformly elliptic only outside the unit ball centered at the origin, where it behaves asymptotically as the classical $p-$Laplace operator. One of the main motivations for the study of equations of this type comes from the optimal transport problems with congestion effects, and when $a(x)=1$ and $b(x,u)=f(x)$ such connection has been exhaustively studied in \cite{BraCa2, BraCa, BraCaSan, BS}.\\
The study of such an equation fits into the wider class of the asymptotically regular problems that have been extensively studied starting from the pioneering paper by Chipot and Evans \cite{CE}, concerning the homogeneous, autonomous quadratic growth case. Later on, still for the homogeneous, autonomous case, the Lipschitz continuity of weak solutions has been established for the superquadratic growth \cite{GiaMo}, and in the subquadratic growth case \cite{LPdNV}. Since then, many contributions to the regularity theory of weak solutions of widely degenerate equations have been established. Among the others we quote the results in \cite{BoDuGiPa, BoDuGiPa1, C, CGPdN, CoFi2, EMM, FPdNV2, Gr, Ru}. It is worth to point out that no more than Lipschitz regularity can be expected for the solutions of widely degenerate problems. Indeed, every $1$-Lipschitz function is a solution of the homogeneous elliptic equation 
\begin{equation*}
\mathrm{div}\Biggl((|Du|-1)^{p-1}_+\frac{Du}{|Du|}\Biggr)=0.
\end{equation*}
However, many higher differentiability results are available for a suitable function of the gradient that vanishes outside the unit ball. This is a common aspect of nonlinear elliptic problems with growth rate $p \geq 2$ for which the higher differentiability is proven for a nonlinear function of the gradient that takes into account the growth and the structure of the equation.\\ Indeed, already for the $p$-Poisson equation, the higher differentiability is established for the function
$$V_p(Du)=|Du|^{\frac{p-2}{2}}Du$$ as can be seen already in the pioneering paper by Uhlenbeck \cite{Uhlenbeck}. In case of widely degenerate problems, this phenomenon persists and higher differentiability results hold true for the function 
$$H_{p/2}(Du)=(|Du|-1)^{p-1}_+\frac{Du}{|Du|}$$
(see \cite{Brasco, BraCa, BraCaSan, CGPdN2, CGM}), that, as one may expect, does not provide any information about the second-order regularity of the solutions in the set where the equation becomes degenerate.\\
Differently, from the above mentioned papers our datum depends on the solution itself. Even in the simplest setting of the Laplacian, the dependence on $u$ constitutes a difficulty by itself and this has been clear since the pioneering paper by Brezis and Nirenberg \cite{BreNir}, where the aim was to prove the existence of a positive solution to the elliptic equation 
\begin{equation*}
-\triangle u=u^p+f(x,u)  \ \ \ \text{on } \Omega.
\end{equation*}
Already in \cite{BreNir}, a critical exponent was identified, for the growth of the function $f(x,u)$ with respect to the function $u$. Later on, the problem has been studied in many different directions, replacing the Laplace with more general non-linear operator and trying to find suitable conditions on the right hand side, to establish existence, regularity and the symmetry of solutions, (see for example \cite{Ba, Bi, DaP, DaS, GuJ, ES, Mon, Sciu1, Sciu2}).\\
In this work we combine both the difficulties mentioned above with the aim of investigating the higher differentiability of solutions. Actually, the $W^{2,2}$ regularity for solutions of equations whose right hand side depends on the solution itself has been widely investigated, since, under suitable sign assumption on $f$, yields a comparison principle for the solutions. The first paper that deals with widely degenerate equation with right hand side depending on the solution is \cite{GRu}, where the authors were able to prove a higher differentiability result for $H_{p/2}(Du)$, under quite more restrictive conditions on the right hand side. In fact, with the respect to the result in \cite{GRu}, not only we deal with weaker assumptions on $b(x,u)$ but also allow for discontinuous coefficients, assuming on the function $a(x)$ only a $W^{1,n}$ regularity. \\To state and describe our results precisely let us specify the assumptions on our data.\\
The coefficient $a(x)$ is assumed to satisfy 
\begin{align}
\label{ipoa}\nonumber
&{\mathrm{(}\mathrm{i}\mathrm{)}} \textbf{ }a(x)\in W^{1,n}_{loc}(\Omega),\\
&{\mathrm{(}\mathrm{ii}\mathrm{)}}\textbf{ }\exists\textbf{ } m,M>0: m<a(x)<M \ \ \ \text{a.e. in }\Omega.
\end{align}
For the datum $b(x,u)$ we assume that there exists an exponent $q$ such that
\begin{equation}
\label{q}
   1\leq q\leq p^*\bigg(\frac{p-2}{p}\bigg)+1,
\end{equation}
and non-negative functions $k(x)$ and $h(x)$ such that 
\begin{align}
\label{ipob1}\nonumber
    &{\mathrm{(}\mathrm{i}\mathrm{)}} \textbf{ }|b(x,z)-b(y,z)|\leq(k(x)+k(y))|x-y||z|^q,\\
    &{\mathrm{(}\mathrm{ii}\mathrm{)}} \textbf{ }|b(x,z)-b(x,t)|\leq h(x)|z-t|(|z|^{q-1}+|t|^{q-1}),
\end{align}
for a.e. $x,y\in \Omega$ and every $s,t\in \mathbb{R}$.
As usual, $p^*$ denotes the Sobolev conjugate exponent of $p$, i.e. \begin{equation*}
p^*=\begin{array}{cc}\begin{cases}
\frac{np}{n-p} & \,\,\mathrm{if}\,\,\,p<n,\\
\text{any finite } r & \,\,\mathrm{if}\,\,\,p\geq n.
\end{cases}\end{array}    
\end{equation*} For the functions appearing in \eqref{ipob1}, we shall assume that
$k(x)\in L^s_{loc}(\Omega), \text{ and }h(x)\in L^\gamma_{loc}(\Omega)$, where $\gamma$ and s are such that 
\begin{align}
\label{s e gam 1}
&\frac{1}{s}=1-\frac{q}{p^*}-\frac{n-2}{np},\\
\label{s e gam 2}
&\frac{1}{\gamma}=1-\frac{q-1}{p^*}-\frac{2(n-1)}{np}.
\end{align}
Note that by \eqref{q} both $s$ and $\gamma$ are greater than $1.$
As it is natural, the values of $s$ and $\gamma$ depend on $q,$ more precisely when $q$ goes to $0$ we obtain a datum $b(x,u)=b(x)$ that does not depend on the solution $u$, while when $q$ goes to its critical value we obtain specific ranges for the values of $s$ and $\gamma$, 
\begin{equation}
 \begin{cases}
     \frac{np}{np-n+2}\leq s_q\leq \frac{np}{p+2},\\
     \frac{np}{np-n-p+2}\leq \gamma_q\leq \frac{np}{2}.
 \end{cases}   
\end{equation}
As an example of the datum $b(x,u)$ that satisfies \eqref{ipob1} we can consider $$b(x,u)=f(x)g(u) \text{ where }f(x)\in W^{1,s}(\Omega)\text{ and }|g(u)|\leq C|u|^q.$$
Moreover, for simplicity, we shall assume that $b(x,0)=0$ which yields, through the second assumption in \eqref{ipob1}, that \begin{equation}
    \label{ipb}
    |b(x,u)|\leq h(x)|u|^q.  
\end{equation}
Our main result is the following 
\begin{thm}
\label{teo3}
   Let $u\in W^{1,p}_{loc}(\Omega)$ be a weak solution of the equation \eqref{equazione mia}, under assumptions \eqref{ipoa}, \eqref{q}, \eqref{ipob1},  \eqref{s e gam 1} and \eqref{s e gam 2}. Then $H_{p/2}(Du)\in W^{1,2}_{loc}(\Omega)$ and the following estimate holds 
\begin{align*}
&\int_{B_{R/2}}|D(H_{p/2}(Du))|^2dx\leq C\int_{B_R}(1+|Du|^p)dx\\
&+C\bigg(\int_{B_{R}}|u|^{p}+|Du|^pdx\bigg)^{\frac{q}{p-1}}\bigg[\bigg(\int_{B_R}|k(x)|^sdx\bigg)^{\frac{p}{s(p-1)}}+\bigg(\int_{B_R}|h(x)|^\gamma dx\bigg)^{\frac{p}{\gamma(p-1)}}\bigg],   
\end{align*}
for every $B_R\subset B_{R_0}\Subset\Omega$ and a constant $C=C(n, p, q, s,\gamma, m, M, R).$
\end{thm}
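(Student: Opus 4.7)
The plan is to apply the difference quotient method and calibrate a chain of H\"older, Sobolev and Young inequalities to the exponents prescribed by \eqref{s e gam 1}-\eqref{s e gam 2}. I would fix cutoff functions $\eta\in C^\infty_c(B_R)$ with $\eta\equiv 1$ on $B_{R/2}$ and $|D\eta|\lesssim 1/R$, and for $|h|$ small I would test the weak formulation of \eqref{equazione mia} with $\varphi=\tau_{-h}(\eta^2\tau_h u)$, where $\tau_h f(x):=f(x+h)-f(x)$. The discrete integration by parts $\int f\,\tau_{-h}g\,dx=-\int\tau_h f\,g\,dx$ and the splittings $\tau_h[a H_{p/2}(Du)]=a(x+h)\tau_h H_{p/2}(Du)+\tau_h a\cdot H_{p/2}(Du)$, $D(\eta^2\tau_h u)=\eta^2\tau_h Du+2\eta D\eta\,\tau_h u$ isolate the coercive term
\[
\int a(x+h)\,\tau_h H_{p/2}(Du)\cdot \tau_h Du\,\eta^2\,dx,
\]
which, after division by $|h|^2$, is bounded from below by $c\int \eta^2|\tau_h H_{p/2}(Du)/h|^2\,dx$ via the monotonicity inequality $|H_{p/2}(\xi)-H_{p/2}(\zeta)|^2\le C\,[H_{p/2}(\xi)-H_{p/2}(\zeta)]\cdot(\xi-\zeta)$, valid for $p\ge 2$, together with the lower bound $a\ge m$.

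All remaining terms are placed on the right-hand side. Those produced by $\tau_h a$, $D\eta$, or the cross-term $\tau_h a\cdot\tau_h Du$ I would control by H\"older and Cauchy-Schwarz, using the $W^{1,n}_{\mathrm{loc}}$ regularity of $a$ (so that $\tau_h a/|h|$ is uniformly bounded in $L^n$) combined with Sobolev for $H_{p/2}(Du)$; they contribute only lower-order quantities of the form $\int_{B_R}(1+|Du|^p)\,dx$. The genuine right-hand side $\tau_h b(x,u)$, after dividing by $|h|^2$, I would decompose as
\[
\tau_h b(x,u)=[b(x+h,u(x+h))-b(x,u(x+h))]+[b(x,u(x+h))-b(x,u(x))]
\]
and treat the two pieces respectively by hypotheses (i) and (ii) in \eqref{ipob1}. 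The first piece is bounded by a three-factor H\"older inequality with exponents $s$, $p^*/q$ and $np/(n-2)$, an assignment that matches exactly $1/s+q/p^*+(n-2)/(np)=1$, i.e.\ \eqref{s e gam 1}. The second piece contains a factor $|\tau_h u/h|^2$ and is bounded by a three-factor H\"older with exponents $\gamma$, $p^*/(q-1)$ and $np/(2(n-1))$, corresponding to \eqref{s e gam 2}.

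To close each of these estimates I would dominate the third H\"older factor (an $L^r$ norm of $\tau_h u/h$ with $r\in\{np/(n-2),np/(n-1)\}$) by $\|Du\|_{L^r(B_R)}$, and then, via the pointwise bound $|Du|\le |H_{p/2}(Du)|^{1/(p-1)}+1$ and the Sobolev embedding $\|H_{p/2}(Du)\|_{L^{2^*}}\le C(\|DH_{p/2}(Du)\|_{L^2}+\text{lower order})$, express this in terms of a power of $\|DH_{p/2}(Du)\|_{L^2(B_R)}$ strictly less than $2$. A Young's inequality, with conjugate exponent chosen to produce the powers $q/(p-1)$, $p/(s(p-1))$ and $p/(\gamma(p-1))$ in the statement, absorbs the bad term into the coercive left-hand side. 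Letting $h\to 0$ through the difference-quotient characterisation of $W^{1,2}$ concludes. The \emph{main obstacle} I expect is exactly this closure step: the two three-factor H\"older decompositions, the Sobolev embedding for $H_{p/2}(Du)$ and the final Young's inequality must all fit together in exponent so that the absorption produces the precise powers in the conclusion. A closely related difficulty is the $W^{1,n}$-only regularity of $a(x)$, which forces the error term from $\tau_h a$ also to pass through Sobolev rather than a pointwise estimate; to make the argument fully rigorous one would presumably first establish the a priori estimate on a non-degenerate regularisation of the operator and then pass to the limit.
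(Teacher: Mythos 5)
Your difference-quotient scheme, the coercivity via the monotonicity of $H_{p-1}$, the two-piece splitting of $\tau_h b(x,u)$, and the calibration of the three-factor H\"older inequalities to \eqref{s e gam 1}--\eqref{s e gam 2} reproduce the paper's a priori estimate (Theorem \ref{teo2}) faithfully, and you correctly identify Lemma \ref{lem:sob} (the self-improvement $H_{p/2}(Du)\in W^{1,2}_{loc}\Rightarrow |Du|\in L^{np/(n-2)}_{loc}$) as the hinge of the closure. There are, however, two genuine gaps. First, the terms produced by $\tau_h a$ are \emph{not} lower order: since $a\in W^{1,n}$ only, pairing $|\tau_h a/h|^2\in L^{n/2}$ with $(|Du|-1)_+^p$ forces the latter into $L^{n/(n-2)}$, and what comes out is a top-order term of the form $\big(\int_{B_R}|Da|^n\big)^{2/n}\int_{B_{\rho_2}}|DH_{p/2}(Du)|^2$, living on a ball \emph{larger} than the one on the left-hand side. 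Absorbing it requires both the smallness of $\|Da\|_{L^n(B_{R_0})}$ coming from absolute continuity of the integral --- this is precisely where the radius $R_0$ in the statement originates, and your sketch never produces it --- and the hole-filling iteration of Lemma \ref{lem:Giusti2} on nested radii; a plain Young absorption on a single ball does not close.

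Second, the regularization you relegate to a final clause is not a formality: the whole estimate is circular without the qualitative information $H_{p/2}(Du)\in W^{1,2}_{loc}$, which is what licenses the use of Lemma \ref{lem:sob} in the first place, and the paper spends Sections 3 and 5 on it. The key device is to \emph{freeze} the right-hand side at $u$: the approximating problems \eqref{equ regolare} carry the datum $b_\varepsilon(x,u)$ (not $b_\varepsilon(x,u_\varepsilon)$), mollified coefficients, and an added $\varepsilon|Du_\varepsilon|^{p-2}Du_\varepsilon$, so that existence is immediate, Theorem \ref{teo1} (proved under Lipschitz data) supplies the qualitative $W^{1,2}$ regularity needed to run the a priori estimate uniformly in $\varepsilon$, and --- crucially --- the limit $v$ solves an equation with the \emph{same} right-hand side $b(x,u)$ as $u$ itself, so that strict monotonicity of $H_{p-1}$ yields $H_{p/2}(Du)=H_{p/2}(Dv)$ and transfers the estimate back to $u$. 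Without the freezing, both the solvability of the approximating problems and the identification of the limit become problematic. (Two smaller slips: the flux in your splitting is $H_{p-1}(Du)$, not $H_{p/2}(Du)$, and the pointwise bound is $|Du|\le |H_{p/2}(Du)|^{2/p}+1$, not $|H_{p/2}(Du)|^{1/(p-1)}+1$.)
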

Let us briefly summarize the proof of the previous Theorem, describing its main points. First of all, we freeze and regularize the datum $b(x,u)$ and regularize the coefficient $a(x)$, in order to cast the problem into a classical variational framework. This way of arguing leads us to the study of the higher differentiability of the solutions to the problem \eqref{equazione nuova} below, in which the right hand side is independent of the solution.\\ The central part of the proof consists in establishing a suitable \textit{a priori} estimate for the derivatives of the composition between the function $H_{p/2}(\xi)$ and the gradient of the solution of the frozen problem. Next, we introduce a family of regularized problems whose solutions satisfy the assumptions of the a priori estimate and we show that this estimate is preserved in passing to the limit. Finally, we come back to the original problem proving that the limit function coincides with the solution of \eqref{equazione mia} outside the ball of radius $1$. This allows us to conclude that the solution of \eqref{equazione mia} shares with this limit function the same higher differentiability property.

\section{Preliminaries \label{sec:prelim}}
\selectlanguage{british}%

\subsection{Notation and essential definitions }

\selectlanguage{english}%
\noindent $\hspace*{1em}$In this paper we shall denote by $C$ or
$c$ a general positive constant that may vary on different occasions.
Relevant dependencies on parameters and special constants will be
suitably emphasized using parentheses or subscripts. \foreignlanguage{british}{The
norm we use on $\mathbb{R}^{k}$, }\foreignlanguage{american}{$k\in\mathbb{N}$}\foreignlanguage{british}{,
will be the standard Euclidean one and it will be denoted by $\left|\,\cdot\,\right|$.
In particular, for the vectors $\xi,\eta\in\mathbb{R}^{k}$, we write
$\langle\xi,\eta\rangle$ for the usual inner product and $\left|\xi\right|:=\langle\xi,\xi\rangle^{\frac{1}{2}}$
for the corresponding Euclidean norm.}\\
$\hspace*{1em}$In what follows, $B_{r}(x_{0})=\left\{ x\in\mathbb{R}^{n}:\left|x-x_{0}\right|<r\right\} $
will denote the $n$-dimensional open ball centered at $x_{0}$ with
radius $r$. We shall sometimes omit the dependence on the center
when all balls occurring in a proof are concentric. Unless otherwise
stated, different balls in the same context will have the same center.\\
\foreignlanguage{british}{$\hspace*{1em}$For further needs, we now
define the auxiliary function $H_{\gamma}:\mathbb{R}^{n}\rightarrow\mathbb{R}^{n}$
by 
\begin{equation}
H_{\gamma}(\xi):=\begin{cases}
\begin{array}{cc}
(\vert\xi\vert-1)_{+}^{\gamma}\,\frac{\xi}{\left|\xi\right|} & \,\,\mathrm{if}\,\,\,\xi\neq0,\\
0 & \,\,\mathrm{if}\,\,\,\xi=0,
\end{array}\end{cases}\label{eq:Hfun}
\end{equation}
}

\selectlanguage{british}%
\noindent where $\gamma>0$ is a parameter. We
conclude this first part of the preliminaries by recalling the following
definition.
\begin{defn}
\noindent A function $u\in W_{loc}^{1,p}(\Omega)$
is a \textit{local weak solution} of equation (\ref{equazione mia})
if and only if, for any test function $\varphi\in W_{0}^{1,p}(\Omega)$,
the following integral identity holds:
\[
\int_{\Omega}\langle a(x)H_{p-1}(Du),D\varphi\rangle\,dx\,=\,\int_{\Omega}b(x,u)\varphi\,dx.
\]
\end{defn}

\subsection{Algebraic inequalities }

\noindent $\hspace*{1em}$In this section, we gather some relevant
algebraic inequalities that will be needed later on.
\noindent We recall the following estimate, whose proof can be
found in \cite[Chapter 12]{Lind}.
\begin{lem}
\noindent \label{lem:Lind} Let $p\in[2,\infty)$ and $k\in\mathbb{N}$.
Then, for every $\xi,\eta\in\mathbb{R}^{k}$, the following inequality
\[
\vert\xi-\eta\vert^{p}\leq\,C\left|\vert\xi\vert^{\frac{p-2}{2}}\xi-\vert\eta\vert^{\frac{p-2}{2}}\eta\right|^{2}
\]
holds for a constant $C\equiv C(p)>0$.
\end{lem}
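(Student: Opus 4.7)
The plan is to prove the inequality via the chain: (i) an integral representation of $V_p(\xi) - V_p(\eta)$ along the segment joining $\eta$ to $\xi$, (ii) a quadratic-form lower bound on the Jacobian of $V_p$, (iii) a one-dimensional integral estimate, and (iv) Cauchy--Schwarz to pass from the inner-product bound to a norm bound. Throughout I write $V_p(\zeta) := |\zeta|^{(p-2)/2}\zeta$, so that the lemma asserts $|\xi-\eta|^p \leq C\,|V_p(\xi)-V_p(\eta)|^2$ for some $C = C(p) > 0$. A direct differentiation yields $V_p \in C^1(\mathbb{R}^k;\mathbb{R}^k)$ for $p \geq 2$ (the singular factor $|\zeta|^{(p-6)/2}$ appearing below is compensated by the quadratic factor in $\zeta$, so the whole Jacobian vanishes like $|\zeta|^{(p-2)/2}$ at the origin), with
\[
DV_p(\zeta)\,v \,=\, |\zeta|^{(p-2)/2}v \,+\, \tfrac{p-2}{2}|\zeta|^{(p-6)/2}(\zeta\cdot v)\zeta.
\]
Since the extra term is non-negative for $p \geq 2$, the symmetric part satisfies the pointwise bound
\[
\langle DV_p(\zeta)v,\,v\rangle \,=\, |\zeta|^{(p-2)/2}|v|^2 + \tfrac{p-2}{2}|\zeta|^{(p-6)/2}(\zeta\cdot v)^2 \,\geq\, |\zeta|^{(p-2)/2}|v|^2.
\]

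Setting $\zeta(t) := \eta + t(\xi-\eta)$ for $t \in [0,1]$, the fundamental theorem of calculus combined with the previous inequality produces
\[
\langle V_p(\xi)-V_p(\eta),\,\xi-\eta\rangle \,=\, \int_0^1 \langle DV_p(\zeta(t))(\xi-\eta),\,\xi-\eta\rangle\,dt \,\geq\, |\xi-\eta|^2\int_0^1|\zeta(t)|^{(p-2)/2}\,dt.
\]
The key one-dimensional estimate at this point is that, for every $\alpha \geq 0$,
\[
\int_0^1 |\zeta(t)|^{\alpha}\,dt \,\geq\, \frac{1}{2^{\alpha}(\alpha+1)}\,|\xi-\eta|^{\alpha}.
\]
This follows by noting that $t \mapsto |\zeta(t)|^2 = |\eta|^2 + 2t\,\eta\cdot(\xi-\eta) + t^2|\xi-\eta|^2$ is a convex parabola with non-negative minimum, so $|\zeta(t)| \geq |t - t^*|\,|\xi-\eta|$ pointwise (where $t^*$ is the vertex), combined with the elementary bound $\int_0^1 |t-t^*|^\alpha\,dt \geq 1/(2^{\alpha}(\alpha+1))$ which holds uniformly in $t^* \in \mathbb{R}$ and is minimized by $t^* = 1/2$.

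Choosing $\alpha = (p-2)/2$ in the above and combining with the integral identity gives $\langle V_p(\xi)-V_p(\eta),\,\xi-\eta\rangle \geq c(p)\,|\xi-\eta|^{(p+2)/2}$. An application of the Cauchy--Schwarz inequality to the left-hand side then yields
\[
|V_p(\xi)-V_p(\eta)|\cdot|\xi-\eta| \,\geq\, c(p)\,|\xi-\eta|^{(p+2)/2},
\]
that is, $|V_p(\xi)-V_p(\eta)| \geq c(p)\,|\xi-\eta|^{p/2}$, and squaring both sides gives the claim with $C(p) = 1/c(p)^2$. The whole argument is elementary; the only mildly technical step is the one-dimensional integral inequality above, and no substantive obstacle is expected. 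This matches the proof strategy in Chapter 12 of \cite{Lind}, to which the statement refers.
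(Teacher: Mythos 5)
Your argument is correct: the Jacobian formula for $V_p$, the lower bound $\langle DV_p(\zeta)v,v\rangle\geq|\zeta|^{(p-2)/2}|v|^2$ for $p\geq2$, the segment representation, the uniform one-dimensional estimate $\int_0^1|t-t^*|^{\alpha}\,dt\geq 2^{-\alpha}(\alpha+1)^{-1}$, and the final Cauchy--Schwarz step all check out, yielding $|V_p(\xi)-V_p(\eta)|\geq c(p)|\xi-\eta|^{p/2}$ and hence the claim. The paper gives no proof of its own, citing only \cite[Chapter 12]{Lind}, and your argument is essentially the standard one found there, so there is nothing to reconcile.
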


\selectlanguage{english}%
\noindent Combining \cite[Lemma 2.2]{AceFu} with \cite[Formula (2.4)]{GiaMo},
we obtain the following
\begin{lem}
\label{D1} Let $1<p<\infty$. There exists a constant $c\equiv c(n,p)>0$
such that 
\begin{center}
$c^{-1}(|\xi|^{2}+|\eta|^{2})^{\frac{p-2}{2}}\leq\dfrac{\left|\vert\xi\vert^{\frac{p-2}{2}}\xi-\vert\eta\vert^{\frac{p-2}{2}}\eta\right|^{2}}{|\xi-\eta|^{2}}\leq c\,(|\xi|^{2}+|\eta|^{2})^{\frac{p-2}{2}}$ 
\par\end{center}
\noindent for every $\xi,\eta\in\mathbb{R}^{n}$ with $\xi\neq\eta$. 
\end{lem}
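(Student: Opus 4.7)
The goal is a purely algebraic two-sided bound for the map $V_p(\xi) := |\xi|^{(p-2)/2}\xi$. My plan is to obtain it by applying the fundamental theorem of calculus along the segment $\zeta_t := (1-t)\eta + t\xi$, thereby reducing the claim to a one-dimensional integral estimate for $t \mapsto |\zeta_t|^{(p-2)/2}$.

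A direct computation yields, for $\zeta \neq 0$,
\[
DV_p(\zeta) = |\zeta|^{(p-2)/2}\Bigl(\mathrm{Id} + \tfrac{p-2}{2}\,\tfrac{\zeta\otimes\zeta}{|\zeta|^2}\Bigr),
\]
a symmetric matrix with eigenvalues $1$ (with multiplicity $n-1$) and $p/2$ (simple); both are strictly positive since $p>1$. Hence
\[
c(p)\,|\zeta|^{(p-2)/2}|v|^2 \leq \bigl(DV_p(\zeta)v\bigr)\cdot v \leq C(p)\,|\zeta|^{(p-2)/2}|v|^2 \qquad \forall\,v\in\mathbb{R}^n.
\]
When the open segment $\{\zeta_t : t\in(0,1)\}$ avoids the origin, the fundamental theorem of calculus gives $V_p(\xi)-V_p(\eta) = \int_0^1 DV_p(\zeta_t)(\xi-\eta)\,dt$. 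Taking the inner product with $\xi-\eta$ and invoking the lower spectral bound produces
\[
\bigl(V_p(\xi)-V_p(\eta)\bigr)\cdot(\xi-\eta) \;\geq\; c(p)\,|\xi-\eta|^2 \int_0^1 |\zeta_t|^{(p-2)/2}\,dt,
\]
from which Cauchy--Schwarz delivers the sought lower bound on $|V_p(\xi)-V_p(\eta)|$; applying the upper spectral bound under the integral sign yields the matching upper bound. The degenerate case in which the segment actually meets the origin is handled by a routine continuity/approximation argument, since $V_p$ is continuous on all of $\mathbb{R}^n$.

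At this point the proof is reduced to the one-dimensional equivalence
\[
\int_0^1 |\zeta_t|^{(p-2)/2}\,dt \;\sim\; (|\xi|^2+|\eta|^2)^{(p-2)/4},
\]
with constants depending only on $p$. The upper bound is immediate when $p\geq 2$ from $|\zeta_t|\leq |\xi|+|\eta|\leq \sqrt{2}\,(|\xi|^2+|\eta|^2)^{1/2}$, and the matching lower bound in this regime follows because $\max_{t\in[0,1]}|\zeta_t|\geq \tfrac{1}{\sqrt{2}}(|\xi|^2+|\eta|^2)^{1/2}$ and $|\zeta_t|$ varies continuously over an interval of length one. The \emph{main obstacle} is the subquadratic regime $1<p<2$: there $(p-2)/2<0$, so the integrand blows up wherever the segment passes near the origin. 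Writing $|\zeta_t|^2 = |\eta|^2 + 2t\langle\eta,\xi-\eta\rangle + t^2|\xi-\eta|^2$ as a quadratic polynomial in $t$, an explicit one-variable computation shows that $\int_0^1|\zeta_t|^\alpha\,dt$ is finite and comparable to $(|\xi|^2+|\eta|^2)^{\alpha/2}$ for every $\alpha>-1$; this covers the full range $p>1$ and closes the argument.
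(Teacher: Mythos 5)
Your proposal is correct, but it takes a different route from the paper: the paper offers no proof at all for this lemma, obtaining it by combining the cited results of Acerbi--Fusco (Lemma 2.2) and Giaquinta--Modica (formula (2.4)), whereas you reconstruct a self-contained argument. Your argument is in fact the classical one underlying those references: differentiate $V_p(\zeta)=|\zeta|^{\frac{p-2}{2}}\zeta$, note that $DV_p(\zeta)$ is symmetric with eigenvalues $|\zeta|^{\frac{p-2}{2}}$ and $\tfrac{p}{2}|\zeta|^{\frac{p-2}{2}}$, integrate along the segment $\zeta_t=(1-t)\eta+t\xi$, and then compare $\int_0^1|\zeta_t|^{\frac{p-2}{2}}\,dt$ with $(|\xi|^2+|\eta|^2)^{\frac{p-4}{4}+\frac12}$; this buys a transparent, quantitative proof valid for all $1<p<\infty$, at the price of having to verify the one-dimensional estimate yourself. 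Two steps are asserted rather than carried out, and both are standard but should be made explicit if the proof is to stand alone: (i) for $p\ge 2$ the lower bound on the integral is better justified not by ``continuity'' but by observing that, if say $|\xi|=\max(|\xi|,|\eta|)=:M$, then $|\zeta_t|\ge tM-(1-t)|\eta|\ge M/2$ for $t\in[3/4,1]$, whence $\int_0^1|\zeta_t|^{\frac{p-2}{2}}dt\ge \tfrac14(M/2)^{\frac{p-2}{2}}$; (ii) for $-1<\alpha<0$ the two-sided comparability of $\int_0^1|\zeta_t|^\alpha dt$ with $(|\xi|^2+|\eta|^2)^{\alpha/2}$ follows from the decomposition $|\zeta_t|^2=|\zeta_{t_0}|^2+(t-t_0)^2|\xi-\eta|^2$ at the minimizing $t_0$ (distinguishing the cases $|\xi-\eta|\ge M/2$ and $|\xi-\eta|<M/2$), which is precisely the content of the Acerbi--Fusco lemma the paper cites. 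With these details filled in, your proof is complete and, unlike the paper's citation, covers the statement in an elementary and self-contained way.
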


\selectlanguage{british}%
\noindent $\hspace*{1em}$\foreignlanguage{english}{For the function
$H_{p-1}(\xi)$ defined at (\ref{eq:Hfun}) with $\gamma=p-1$, we record
the following estimates, }which can be obtained by suitably modifying
the proofs of\foreignlanguage{english}{ \cite[Lemma 4.1]{BraCaSan}
(for the case $p\ge2$), \cite[Lemma 2.5]{AM}(for the case $1<p<2$)
and} \cite[Lemma 2.8]{BoDuGiPa}. 
\selectlanguage{english}%

\begin{lem}
\label{lem:Brasco} Let \foreignlanguage{british}{$p\in(1,\infty)$}. Then, there exists a constant $c\equiv c(n,p)>0$
such that 
\begin{equation}
\langle H_{p-1}(\xi)-H_{p-1}(\eta),\xi-\eta\rangle\,\geq\,c\,\vert H_{\frac{p}{2}}(\xi)-H_{\frac{p}{2}}(\eta)\vert^{2},\label{eq:BraAmb}
\end{equation}
for every $\xi,\eta\in\mathbb{R}^{n}$. \foreignlanguage{british}{Moreover,
if $2\leq p<\infty$ we have 
\[
\vert H_{p-1}(\xi)-H_{p-1}(\eta)\vert\leq (p-1)\bigg(\vert H_{p/2}(\xi)|^{\frac{p-2}{p}}+|H_{p/2}(\eta)\vert^{\frac{p-2}{p}}\bigg)\vert H_{p/2}(\xi)-H_{p/2}(\eta)\vert.
\]
}
\end{lem}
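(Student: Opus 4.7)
\medskip
\noindent \emph{Proof plan.} The strategy is to rewrite $H_{p-1}(\xi)$ and $H_{p/2}(\xi)$ in terms of the auxiliary vector $\widetilde{\xi}:=H_{1}(\xi)=(|\xi|-1)_{+}\,\xi/|\xi|$, so as to reduce both inequalities to classical algebraic inequalities for the standard $p$-Laplace type expression $z\mapsto |z|^{p-2}z$. The crucial identities are
\[
H_{p-1}(\xi)=|\widetilde{\xi}|^{p-2}\widetilde{\xi},\qquad H_{p/2}(\xi)=|\widetilde{\xi}|^{\frac{p-2}{2}}\widetilde{\xi},\qquad |\widetilde{\xi}|^{\frac{p-2}{2}}=|H_{p/2}(\xi)|^{\frac{p-2}{p}},
\]
together with the observation that the map $P(\xi):=\xi-\widetilde{\xi}$ coincides with $\xi$ when $|\xi|\leq 1$ and with $\xi/|\xi|$ otherwise, i.e.\ it is the nearest-point projection of $\xi$ onto the closed unit ball.

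For the monotonicity estimate \eqref{eq:BraAmb} I would split the increment as $\xi-\eta=(\widetilde{\xi}-\widetilde{\eta})+(P(\xi)-P(\eta))$ and write
\[
\langle H_{p-1}(\xi)-H_{p-1}(\eta),\xi-\eta\rangle=\langle H_{p-1}(\xi)-H_{p-1}(\eta),\widetilde{\xi}-\widetilde{\eta}\rangle+\langle H_{p-1}(\xi)-H_{p-1}(\eta),P(\xi)-P(\eta)\rangle.
\]
The first term is precisely $\langle|\widetilde{\xi}|^{p-2}\widetilde{\xi}-|\widetilde{\eta}|^{p-2}\widetilde{\eta},\widetilde{\xi}-\widetilde{\eta}\rangle$, which is bounded below by $c\,||\widetilde{\xi}|^{(p-2)/2}\widetilde{\xi}-|\widetilde{\eta}|^{(p-2)/2}\widetilde{\eta}|^{2}=c\,|H_{p/2}(\xi)-H_{p/2}(\eta)|^{2}$ via the classical inequality behind Lemma \ref{D1}. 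For the second term one argues by cases on whether $|\xi|$ and $|\eta|$ lie inside or outside the unit ball: if both are inside, $H_{p-1}\equiv 0$ and the term vanishes; if both are outside, both $H_{p-1}(\xi)-H_{p-1}(\eta)$ and $P(\xi)-P(\eta)$ are explicit combinations of the unit vectors $\xi/|\xi|$ and $\eta/|\eta|$, and the inner product reduces to $\bigl((|\xi|-1)^{p-1}+(|\eta|-1)^{p-1}\bigr)\bigl(1-\langle\xi/|\xi|,\eta/|\eta|\rangle\bigr)\geq 0$; the mixed case reduces analogously to $(|\eta|-1)^{p-1}(1-\langle\eta/|\eta|,\xi\rangle)\geq 0$, using $|\xi|<1$.

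For the upper estimate, which is stated only for $p\geq 2$, I would apply to $a=\widetilde{\xi}$, $b=\widetilde{\eta}$ the classical algebraic inequality
\[
||a|^{p-2}a-|b|^{p-2}b|\leq (p-1)\bigl(|a|^{\frac{p-2}{2}}+|b|^{\frac{p-2}{2}}\bigr)\,\bigl||a|^{\frac{p-2}{2}}a-|b|^{\frac{p-2}{2}}b\bigr|,
\]
which can be obtained by integrating $\frac{d}{dt}[|b+t(a-b)|^{p-2}(b+t(a-b))]$ and dominating the integrand by $(p-1)(|a|+|b|)^{p-2}|a-b|$, then combining with the lower bound provided by Lemma \ref{D1}; the passage from $(|a|+|b|)^{(p-2)/2}$ to $|a|^{(p-2)/2}+|b|^{(p-2)/2}$ uses concavity for $2\leq p\leq 4$ and convexity for $p\geq 4$. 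Substituting back via the identity $|\widetilde{\xi}|^{(p-2)/2}=|H_{p/2}(\xi)|^{(p-2)/p}$ gives exactly the claimed bound.

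The main obstacle is the sign analysis of the projection term in the first inequality: the cleanest way to handle the three sub-cases at once is the projection viewpoint above, after which each sub-case becomes a short computation; writing the cases explicitly is otherwise rather tedious but routine.
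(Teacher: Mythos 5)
Your argument is correct and is essentially the standard one behind the references the paper cites for this lemma ([BraCaSan, Lemma 4.1], [BoDuGiPa, Lemma 2.8]); the paper itself gives no proof. The key identities $H_{p-1}(\xi)=|\widetilde{\xi}|^{p-2}\widetilde{\xi}$, $H_{p/2}(\xi)=|\widetilde{\xi}|^{(p-2)/2}\widetilde{\xi}$ with $\widetilde{\xi}=H_1(\xi)$, the splitting $\xi-\eta=(\widetilde{\xi}-\widetilde{\eta})+(P(\xi)-P(\eta))$, and the case-by-case verification that the projection term is non-negative are all sound (including the boundary cases $\xi=0$ and $|\xi|=1$).

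One minor point on the second inequality: your chain --- integrate the derivative, bound by $(p-1)(|a|+|b|)^{p-2}|a-b|$, then invoke Lemma \ref{D1} and the convexity/concavity of $t\mapsto t^{(p-2)/2}$ --- yields the estimate only with an unspecified constant $c(p)$, not with the constant $(p-1)$ as stated. To get exactly $(p-1)$, work directly at the level of $A=H_{p/2}(\xi)$, $B=H_{p/2}(\eta)$: since $H_{p-1}(\xi)=G(H_{p/2}(\xi))$ with $G(z)=|z|^{\frac{p-2}{p}}z$ and $|DG(z)|\leq\frac{2(p-1)}{p}|z|^{\frac{p-2}{p}}$, the mean value inequality along the segment from $B$ to $A$ gives
\[
|G(A)-G(B)|\;\leq\;\tfrac{2(p-1)}{p}\max(|A|,|B|)^{\frac{p-2}{p}}\,|A-B|\;\leq\;(p-1)\Bigl(|A|^{\frac{p-2}{p}}+|B|^{\frac{p-2}{p}}\Bigr)|A-B|,
\]
which is the claimed bound. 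Since the constant $(p-1)$ is never used quantitatively in the paper, this is cosmetic, but the direct route is also shorter than passing through Lemma \ref{D1}.
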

\begin{lem}
\label{eq:BraAmb}
For every $(\alpha,\varepsilon)\in \mathbb{R}^+\times\mathbb{R}^+$ with $\alpha<\varepsilon,$ \foreignlanguage{british} there exist two positive constants $\beta_1(\alpha,\varepsilon)$ and $\beta_2(\alpha,\varepsilon, n)$
such that 
\begin{equation}
\beta_1 |H_{\varepsilon}(\xi)-H_{\varepsilon}(\eta)|\leq\,c\,\frac{\vert H_{\alpha}(\xi)-H_{\alpha}(\eta)\vert}{\big((|\xi|-1)^\varepsilon_++(|\eta|-1)^\varepsilon_+\big)^{\frac{\alpha-\varepsilon}{\varepsilon}}}\leq\beta_2|H_{\varepsilon}(\xi)-H_{\varepsilon}(\eta)|
\end{equation}
for every $\xi,\eta\in\mathbb{R}^{n}$. 
\end{lem}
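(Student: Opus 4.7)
The pivotal observation is that, introducing the radial ``shift'' $\Phi(\xi):=(|\xi|-1)_+\,\xi/|\xi|$ (with $\Phi(0):=0$), one has $|\Phi(\xi)|=(|\xi|-1)_+$ and, for every $\gamma>0$ and $\xi\neq 0$,
\[
H_\gamma(\xi)=|\Phi(\xi)|^{\gamma-1}\Phi(\xi)=V_\gamma(\Phi(\xi)),\qquad V_\gamma(z):=|z|^{\gamma-1}z.
\]
The plan is to dispose by hand of the degenerate configurations in which at least one of $\xi,\eta$ lies in the closed unit ball, and then to reduce the main case to the classical two-sided equivalence for $V_\gamma$-differences of $\zeta:=\Phi(\xi)$ and $\omega:=\Phi(\eta)$.

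First I would treat the case in which both $|\xi|\le 1$ and $|\eta|\le 1$: all three terms of the chain vanish identically and the assertion is trivial. When only one of the two points lies in the unit ball, say $|\eta|\le 1<|\xi|$, direct substitution gives $|H_\gamma(\xi)-H_\gamma(\eta)|=(|\xi|-1)^\gamma$ for every $\gamma>0$, and each of the three terms of the claimed chain reduces to a constant multiple of $(|\xi|-1)^\varepsilon$; the inequality then holds for suitable $\beta_1,\beta_2$.

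In the bulk case $|\xi|,|\eta|>1$ I would set $\zeta:=\Phi(\xi)$ and $\omega:=\Phi(\eta)$, so that $H_\gamma(\xi)-H_\gamma(\eta)=V_\gamma(\zeta)-V_\gamma(\omega)$. Applying Lemma~\ref{D1} with the choice $p=2\gamma$ yields, for each such $\gamma$,
\[
c(\gamma,n)^{-1}\,(|\zeta|^2+|\omega|^2)^{(\gamma-1)/2}\,|\zeta-\omega|\;\le\;|V_\gamma(\zeta)-V_\gamma(\omega)|\;\le\;c(\gamma,n)\,(|\zeta|^2+|\omega|^2)^{(\gamma-1)/2}\,|\zeta-\omega|.
\]
Using this estimate twice, once with $\gamma=\varepsilon$ and once with $\gamma=\alpha$, the factor $|\zeta-\omega|$ cancels in the ratio, producing
\[
\frac{|V_\varepsilon(\zeta)-V_\varepsilon(\omega)|}{|V_\alpha(\zeta)-V_\alpha(\omega)|}\;\sim\;(|\zeta|^2+|\omega|^2)^{(\varepsilon-\alpha)/2}.
\]
The elementary two-variable equivalences $(|\zeta|^2+|\omega|^2)^{1/2}\sim|\zeta|+|\omega|\sim(|\zeta|^\varepsilon+|\omega|^\varepsilon)^{1/\varepsilon}$ (with constants depending only on $\varepsilon$) then rewrite the right hand side as $\bigl((|\xi|-1)_+^\varepsilon+(|\eta|-1)_+^\varepsilon\bigr)^{(\varepsilon-\alpha)/\varepsilon}$, and translating back from $V_\gamma$ to $H_\gamma$ gives the desired chain.

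The delicate point is the use of Lemma~\ref{D1} when one of $\alpha,\varepsilon$ is not larger than $1/2$, since then the associated exponent $p=2\gamma$ falls outside the range $p>1$ for which that lemma is stated. In such a regime the two-sided estimate above for $|V_\gamma(\zeta)-V_\gamma(\omega)|$ must be recovered by a direct argument: one splits according to whether $|\zeta|\sim|\omega|$ or the two norms have different orders, and combines this with the scalar equivalence $|a^\gamma-b^\gamma|\sim(a+b)^{\gamma-1}|a-b|$, valid for all $a,b\ge 0$ and $\gamma>0$. This is the only step that requires non-routine work; the remainder of the argument is bookkeeping.
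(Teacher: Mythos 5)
The paper never proves this lemma --- it is recorded among the preliminaries as an estimate obtainable by modifying arguments from the cited references --- so there is no in-text proof to compare against; your argument stands on its own and is correct. The substitution $H_\gamma(\xi)=V_\gamma(\Phi(\xi))$ with $\Phi(\xi)=(|\xi|-1)_+\,\xi/|\xi|$ is exactly the right reduction: since $|\Phi(\xi)|=(|\xi|-1)_+$, applying Lemma~\ref{D1} twice (with $p=2\varepsilon$ and $p=2\alpha$) and cancelling $|\zeta-\omega|$ produces the factor $(|\zeta|^2+|\omega|^2)^{(\varepsilon-\alpha)/2}\sim\bigl((|\xi|-1)_+^\varepsilon+(|\eta|-1)_+^\varepsilon\bigr)^{(\varepsilon-\alpha)/\varepsilon}$, which is the claimed chain (the extra ``$c$'' in the paper's display is evidently a typo to be absorbed into $\beta_1,\beta_2$). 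Your treatment of the degenerate configurations is also right: when exactly one of the points lies in the unit ball all three quantities reduce to $(|\xi|-1)^\varepsilon$ up to constants, and when both do, or when $\Phi(\xi)=\Phi(\eta)$, everything vanishes. The one point you correctly flag as delicate --- Lemma~\ref{D1} needs $2\gamma>1$, so for $\alpha\le 1/2$ the two-sided bound for $V_\alpha$ must be established directly --- is genuinely outside that lemma's range, but your sketched repair works: writing $|V_\gamma(\zeta)-V_\gamma(\omega)|^2=a^{2\gamma}+b^{2\gamma}-2a^\gamma b^\gamma\cos\theta$ with $a=|\zeta|$, $b=|\omega|$, both this quantity and $(a+b)^{2\gamma-2}|\zeta-\omega|^2$ are affine in $\cos\theta$, so the two-sided comparison only needs to be checked at $\theta=0$ (where it is your scalar equivalence $|a^\gamma-b^\gamma|\sim(a+b)^{\gamma-1}|a-b|$) and at $\theta=\pi$ (where it is immediate). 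It is worth noting that in every application the paper actually makes one has $\alpha=p/2\ge 1$ and $\varepsilon\in\{p-1,p\}$, so the sub-$1/2$ regime never occurs; your proof nevertheless covers the lemma as stated.
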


We conclude by recalling a well-known iteration Lemma, whose proof can be found in \cite[Lemma 6.1]{Giu}.
\begin{lem}
    \label{lem:Giusti2} Let \foreignlanguage{british}{$Z(t)$} be a bounded non-negative function in the interval $[\rho,R].$ Assume that for $\rho\leq s<t\leq R$ we have $$Z(s)\leq[A(t-s)^{-\alpha}+B(t-s)^{-\beta}+C]+\theta Z(t)$$ with $A, B, C\geq0,\textbf{ } \alpha,\beta>0$ and $0\leq \theta<1.$ Then,
    $$Z(\rho)\leq c(\alpha,\theta)[A(R-\rho)^{-\alpha}+B(R-\rho)^{-\beta}+C].$$
\end{lem}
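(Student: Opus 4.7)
The plan is to iterate the given inequality along a geometric sequence of radii converging from $\rho$ up to $R$, exploiting the factor $\theta<1$ on the right-hand side together with the boundedness of $Z$ to absorb the tail term into a convergent geometric series.

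Concretely, I would fix a parameter $\tau\in(0,1)$ (to be chosen later depending on $\theta$, $\alpha$, $\beta$) and define the increasing sequence of radii
\[
t_0=\rho,\qquad t_{i+1}=t_i+(1-\tau)\tau^{i}(R-\rho),
\]
so that $t_i=\rho+(1-\tau^{i})(R-\rho)\uparrow R$ and $t_{i+1}-t_i=(1-\tau)\tau^{i}(R-\rho)$. Applying the hypothesis with $s=t_i$ and $t=t_{i+1}$ and setting $\delta:=R-\rho$, I get
\[
Z(t_i)\leq \theta\,Z(t_{i+1})+A(1-\tau)^{-\alpha}\tau^{-i\alpha}\delta^{-\alpha}+B(1-\tau)^{-\beta}\tau^{-i\beta}\delta^{-\beta}+C.
\]
Iterating this estimate $k$ times yields
\[
Z(\rho)\leq \theta^{k}Z(t_k)+\sum_{i=0}^{k-1}\theta^{i}\Bigl[A(1-\tau)^{-\alpha}\tau^{-i\alpha}\delta^{-\alpha}+B(1-\tau)^{-\beta}\tau^{-i\beta}\delta^{-\beta}+C\Bigr].
\]

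The main technical point, and really the only delicate choice, is to pick $\tau$ so close to $1$ that both $\theta\,\tau^{-\alpha}<1$ and $\theta\,\tau^{-\beta}<1$; explicitly it suffices to take any $\tau\in\bigl(\theta^{1/\max(\alpha,\beta)},1\bigr)$, which is nonempty since $\theta<1$. With such a choice the two geometric series $\sum_i(\theta\tau^{-\alpha})^{i}$ and $\sum_i(\theta\tau^{-\beta})^{i}$ are convergent, as is $\sum_i\theta^{i}$, so letting $k\to\infty$ the finite sums are bounded by constants depending only on $\alpha$, $\beta$, $\theta$.

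Finally, since $Z$ is by assumption bounded on $[\rho,R]$ and $\theta^{k}\to 0$, the remainder $\theta^{k}Z(t_k)$ vanishes in the limit. Combining these observations gives
\[
Z(\rho)\leq c(\alpha,\theta)\bigl[A\delta^{-\alpha}+B\delta^{-\beta}+C\bigr]=c(\alpha,\theta)\bigl[A(R-\rho)^{-\alpha}+B(R-\rho)^{-\beta}+C\bigr],
\]
which is the desired estimate. I do not foresee any real obstacle: the argument is a standard hole-filling/iteration scheme, and the only subtlety is ensuring the ratio $\tau$ is chosen compatibly with both exponents $\alpha$ and $\beta$, which is handled uniformly by taking $\tau$ close enough to $1$.
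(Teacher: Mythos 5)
Your argument is correct and is precisely the standard iteration proof of this lemma, which the paper does not reproduce but simply cites from Giusti's book (\cite[Lemma 6.1]{Giu}); the choice $\tau\in(\theta^{1/\max(\alpha,\beta)},1)$ indeed makes both geometric series $\sum_i(\theta\tau^{-\alpha})^i$ and $\sum_i(\theta\tau^{-\beta})^i$ convergent, and the boundedness of $Z$ kills the remainder $\theta^k Z(t_k)$. The only cosmetic remark is that the resulting constant depends on $\alpha$, $\beta$ and $\theta$ (the paper's notation $c(\alpha,\theta)$ is just imprecise on this point).
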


\subsection{Difference quotients}

\label{subsec:DiffOpe}

\noindent $\hspace*{1em}$We recall here the definition and some elementary
properties of the difference quotients that will be useful in the
following (see, for example, \cite{Giu}). 
\begin{defn}
\noindent For every vector-valued function $F:\mathbb{R}^{n}\rightarrow\mathbb{R}^{k}$
the \textit{finite difference operator }in the direction $x_{j}$
is defined by 
\[
\tau_{j,h}F(x)=F(x+he_{j})-F(x),
\]
where $h\in\mathbb{R}$, $e_{j}$ is the unit vector in the direction
$x_{j}$ and $j\in\{1,\ldots,n\}$.\\
 $\hspace*{1em}$The \textit{difference quotient} of $F$ with respect
to $x_{j}$ is defined for $h\in\mathbb{R}\setminus\{0\}$ by 
\[
\Delta_{j,h}F(x)\,=\,\frac{\tau_{j,h}F(x)}{h}\,.
\]
\end{defn}

\noindent When no confusion can arise, we shall omit the index $j$
and simply write $\tau_{h}$ or $\Delta_{h}$ instead of $\tau_{j,h}$
or $\Delta_{j,h}$, respectively. 
\selectlanguage{british}%
\begin{prop}
\label{prop}
\noindent Let $\Omega\subset\mathbb{R}^{n}$ be an open set and let
$F\in W^{1,q}(\Omega)$, with $q\geq1$. Moreover, let $G:\Omega\rightarrow\mathbb{R}$
be a measurable function and consider the set
\[
\Omega_{\vert h\vert}:=\left\{ x\in\Omega:\mathrm{dist}\left(x,\partial\Omega\right)>\vert h\vert\right\} .
\]
\foreignlanguage{english}{Then:}\\
\foreignlanguage{english}{}\\
\foreignlanguage{english}{$\mathrm{(}\mathrm{i}\mathrm{)}$ $\Delta_{h}F\in W^{1,q}\left(\Omega_{\vert h\vert}\right)$
and $\partial_{i}(\Delta_{h}F)=\Delta_{h}(\partial_{i}F)$ for every
$\,i\in\{1,\ldots,n\}$.}\\

\selectlanguage{english}%
\noindent $\mathrm{(}\mathrm{ii}\mathrm{)}$ If at least one of the
functions $F$ or $G$ has support contained in $\Omega_{\vert h\vert}$,
then 
\[
\int_{\Omega}F\,\Delta_{h}G\,dx\,=\,-\int_{\Omega}G\,\Delta_{-h}F\,dx.
\]
$\mathrm{(}\mathrm{iii}\mathrm{)}$ We have 
\[
\Delta_{h}(FG)(x)=F(x+he_{j})\Delta_{h}G(x)\,+\,G(x)\Delta_{h}F(x).
\]
\end{prop}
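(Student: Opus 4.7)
The three assertions are all standard facts that reduce to direct manipulation of the defining formula $\Delta_h F(x) = \bigl(F(x+he_j) - F(x)\bigr)/h$, so my plan is simply to carry out each one in order, being careful about domains of integration.

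For part (i), I would observe that the translation $x \mapsto F(x+he_j)$ belongs to $W^{1,q}(\Omega_{|h|})$ because $F\in W^{1,q}(\Omega)$ and $x+he_j\in\Omega$ whenever $x\in\Omega_{|h|}$; linearity then gives $\Delta_h F\in W^{1,q}(\Omega_{|h|})$. The commutation $\partial_i(\Delta_h F) = \Delta_h(\partial_i F)$ is a consequence of the fact that weak differentiation commutes with translations, which one checks by testing against $\varphi\in C_c^\infty(\Omega_{|h|})$ and using the definition of weak derivative on the translated integral.

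For part (ii), the plan is to split $\int_\Omega F\,\Delta_h G\,dx$ into the two pieces $h^{-1}\int F(x)G(x+he_j)\,dx$ and $-h^{-1}\int F(x)G(x)\,dx$, and then change variables $y = x+he_j$ in the first piece. This produces $h^{-1}\int G(y)F(y-he_j)\,dy$, and after combining with the second piece one recovers $-\int_\Omega G\,\Delta_{-h}F\,dx$. The only delicate point — and what I would flag as the main obstacle — is justifying that the change of variables does not introduce boundary contributions. This is precisely where the support hypothesis in the statement is used: if, say, $F$ is supported in $\Omega_{|h|}$, then both $F(x)G(x+he_j)$ and $F(y-he_j)G(y)$ vanish outside $\Omega_{|h|}$, so one may integrate freely over $\mathbb{R}^n$ and the translation is harmless. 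Once this is checked, (ii) is immediate.

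For part (iii), I would just add and subtract $F(x+he_j)G(x)$ in the numerator of $\Delta_h(FG)(x)$, i.e.\ write
\[
F(x+he_j)G(x+he_j) - F(x)G(x) = F(x+he_j)\bigl(G(x+he_j)-G(x)\bigr) + G(x)\bigl(F(x+he_j)-F(x)\bigr),
\]
and divide by $h$. This is a one-line algebraic identity and requires no further hypotheses. Overall the proposition is routine, with the only real content being the careful use of the support condition in (ii).
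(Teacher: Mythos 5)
Your argument is correct. The paper states this proposition without proof, simply citing \cite{Giu}, and your three steps (translation invariance of $W^{1,q}$ plus commutation of weak derivatives with translation for (i), the change of variables $y=x+he_j$ justified by the support hypothesis for (ii), and the add-and-subtract discrete Leibniz identity for (iii)) are exactly the standard arguments found in that reference; in particular you correctly identify that $h^{-1}\bigl(F(y-he_j)-F(y)\bigr)=-\Delta_{-h}F(y)$, which is the only place where a sign could go wrong.
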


\selectlanguage{english}%
\noindent The next result about the finite difference operator is
a kind of integral version of the Lagrange Theorem and its proof can
be found in \cite[Lemma 8.1]{Giu}. 
\begin{lem}
\noindent \label{lem:Giusti1} If $0<\rho<R$, $\vert h\vert<\frac{R-\rho}{2}$,
$1<q<+\infty$ and $F\in L^{q}(B_{R},\mathbb{R}^{k})$ is such that
$DF\in L^{q}(B_{R},\mathbb{R}^{k\times n})$, then 
\[
\int_{B_{\rho}}\left|\tau_{h}F(x)\right|^{q}dx\,\leq\,c^{q}(n)\,\vert h\vert^{q}\int_{B_{R}}\left|DF(x)\right|^{q}dx.
\]
Moreover 
\[
\int_{B_{\rho}}\left|F(x+he_{j})\right|^{q}dx\,\leq\,\int_{B_{R}}\left|F(x)\right|^{q}dx.
\]
\end{lem}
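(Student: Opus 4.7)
The plan is to prove both inequalities first for smooth $F$ and then pass to general Sobolev $F$ by a standard mollification/density argument; the choice $|h|<(R-\rho)/2$ is used precisely to ensure that all translated balls of interest remain inside $B_R$.

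For the first inequality, I would start by assuming $F\in C^1(\overline{B_R},\mathbb{R}^k)$ (by density of smooth functions in $W^{1,q}(B_R)$, since the statement is purely an integral bound that is stable under $L^q$ limits). Then the fundamental theorem of calculus yields
\[
\tau_h F(x)=F(x+he_j)-F(x)=h\int_0^1 \partial_j F(x+the_j)\,dt,
\]
so that $|\tau_h F(x)|\leq |h|\int_0^1 |DF(x+the_j)|\,dt$. Raising to the $q$-th power and invoking Jensen's inequality (with respect to the probability measure $dt$ on $[0,1]$) gives
\[
|\tau_h F(x)|^q\leq |h|^q\int_0^1 |DF(x+the_j)|^q\,dt.
\]
Integrating over $B_\rho$ and applying Fubini, I would then perform the change of variables $y=x+the_j$ (with unit Jacobian) in the inner integral. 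For $x\in B_\rho$ and $t\in[0,1]$ one has $|y|<\rho+|h|<\rho+(R-\rho)/2<R$, so $y\in B_R$. Therefore
\[
\int_{B_\rho}|\tau_h F(x)|^q\,dx\leq |h|^q\int_0^1\!\!\int_{B_\rho+the_j}|DF(y)|^q\,dy\,dt\leq |h|^q\int_{B_R}|DF(y)|^q\,dy,
\]
giving the desired inequality (with $c(n)=1$, which is sharper than but consistent with the stated $c^q(n)$). The extension from $C^1$ to the general case is by taking a sequence $F_\varepsilon\to F$ with $F_\varepsilon\in C^\infty$ and $DF_\varepsilon\to DF$ in $L^q_{loc}(B_R)$, then passing to the limit on both sides (the left-hand side converges by strong $L^q$ convergence on $B_\rho$ plus translation, the right-hand side by hypothesis).

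For the second inequality, no smoothing is necessary: by the translation invariance of the Lebesgue measure, the change of variables $y=x+he_j$ gives
\[
\int_{B_\rho}|F(x+he_j)|^q\,dx=\int_{B_\rho+he_j}|F(y)|^q\,dy,
\]
and the containment $B_\rho+he_j\subset B_{\rho+|h|}\subset B_R$, ensured again by $|h|<(R-\rho)/2<R-\rho$, bounds the last quantity by $\int_{B_R}|F|^q\,dy$.

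The only technical point worth care is the density step in the first inequality: one must verify that the inequality is stable when passing from $F_\varepsilon$ to $F$ even though $\tau_h F$ involves pointwise values, which is handled by the continuity of the translation operator in $L^q$. No obstacle is serious here — the lemma is essentially the integral form of the mean value theorem, with the containment of translated balls in $B_R$ being the only geometric input.
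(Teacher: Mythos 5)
Your proof is correct and is essentially the standard argument behind this lemma, which the paper does not prove itself but cites from Giusti's book (Lemma 8.1 there): the fundamental theorem of calculus along the segment, Jensen's inequality and Fubini with a change of variables for the first estimate, and translation invariance plus the containment $B_{\rho}+he_{j}\subset B_{R}$ for the second, with the general case recovered by density of smooth functions in $W^{1,q}$ and continuity of translations in $L^{q}$. Nothing further is needed.
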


\noindent Finally, we recall the following fundamental result, whose
proof can be found in \cite[Lemma 8.2]{Giu}. 
\begin{lem}
\noindent \label{lem:RappIncre} Let $F:\mathbb{R}^{n}\rightarrow\mathbb{R}^{k}$,
$F\in L^{q}(B_{R},\mathbb{R}^{k})$ with $1<q<+\infty$. Suppose that
there exist $\rho\in(0,R)$ and a constant $M>0$ such that 
\[
\sum_{j=1}^{n}\int_{B_{\rho}}\left|\tau_{j,h}F(x)\right|^{q}dx\,\leq\,M^{q}\,\vert h\vert^{q}
\]
for every $h\in\mathbb{R}$ with $\vert h\vert<\frac{R-\rho}{2}$.
Then $F\in W^{1,q}(B_{\rho},\mathbb{R}^{k})$. Moreover 
\[
\Vert DF\Vert_{L^{q}(B_{\rho})}\leq M
\]
and 
\[
\Delta_{j,h}F\rightarrow\partial_{j}F\,\,\,\,\,\,\,\,\,\,in\,\,L_{loc}^{q}(B_{R},\mathbb{R}^{k})\,\,\,\,\mathit{as}\,\,h\rightarrow0,
\]
for each $j\in\{1,\ldots,n\}$. 
\end{lem}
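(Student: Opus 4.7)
My plan follows the three-stage outline sketched after the statement: freeze the right-hand side, prove a difference-quotient \emph{a priori} estimate for an auxiliary regularized problem, and pass to the limit while identifying the limit with $u$ in the non-degenerate region.

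\textbf{Freezing and regularization.} Fix the solution $u$ and set $f(x):=b(x,u(x))$. By \eqref{ipb}, Hölder, and the Sobolev embedding $W^{1,p}\hookrightarrow L^{p^*}$, the function $f$ lies in some $L^\sigma_{loc}$ with $\sigma>1$ dictated by $q,s,\gamma$ and \eqref{s e gam 1}--\eqref{s e gam 2}. In parallel, mollify $a$ to $a_\varepsilon\in C^\infty$ preserving \eqref{ipoa} uniformly, and replace the degenerate map $H_{p-1}$ by a smooth monotone approximation $H_{p-1,\delta}$ which is uniformly elliptic for each $\delta>0$ and tends to $H_{p-1}$ as $\delta\to 0$. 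Classical $p$-Laplacian theory supplies solutions $v_{\varepsilon,\delta}\in W^{2,2}_{loc}\cap W^{1,\infty}_{loc}$ of $\mathrm{div}(a_\varepsilon H_{p-1,\delta}(Dv))=f_\varepsilon$, on which every manipulation below is rigorous.

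\textbf{The a priori estimate.} For such a regular $v$ and a cutoff $\eta\in C^1_c(B_R)$ with $\eta\equiv 1$ on $B_\rho$ and $|h|<(R-\rho)/2$, I would test the equation with $\varphi=\Delta_{-h}(\eta^2\Delta_h v)$, which is admissible by Proposition~\ref{prop}. Transferring the finite difference onto the operator and using the lower bound in Lemma~\ref{lem:Brasco} gives
\begin{equation*}
c\int a\,\eta^2\,\bigl|\Delta_h H_{p/2}(Dv)\bigr|^2 dx \;\le\; \mathrm{(I)}+\mathrm{(II)}+\mathrm{(III)},
\end{equation*}
where (I) is the cutoff error, (II) the coefficient-variation contribution involving $\Delta_h a$, and (III) the right-hand side integral $\int|\Delta_h f|\,\eta^2|\Delta_h v|\,dx$. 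Term (I) is routine. For (II), the second inequality in Lemma~\ref{lem:Brasco} dominates $|H_{p-1}|$ by $|H_{p/2}|^{(p-2)/p}$ times $|H_{p/2}|$, after which Hölder with $\|Da\|_{L^n}$, Sobolev embedding of $\eta\Delta_h v$, and Lemma~\ref{D1} produce a $|h|^2$ factor and a piece absorbable via Young's inequality. Term (III) is the decisive one: by \eqref{ipob1} I split
\begin{equation*}
|\Delta_h f(x)|\le (k(x{+}he)+k(x))|h|\,|u(x{+}he)|^q + h(x)\,|\tau_h u|\bigl(|u(x{+}he)|^{q-1}+|u(x)|^{q-1}\bigr),
\end{equation*}
and each summand is estimated by Hölder with exponents read off from \eqref{s e gam 1}--\eqref{s e gam 2}, together with Sobolev embedding of $u$ and of $\eta\Delta_h v$ and with Lemma~\ref{lem:Giusti1} applied to $u\in W^{1,p}$. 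Those exponents are designed exactly so that an $|h|^2$ factor emerges with no surplus, enabling absorption into the left-hand side.

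\textbf{Limit, iteration and identification with $u$.} Dividing by $|h|^2$, Lemma~\ref{lem:RappIncre} upgrades the bound to $H_{p/2}(Dv)\in W^{1,2}_{loc}$ with a Caccioppoli-type estimate between $B_\rho$ and $B_R$; the powers of $(R-\rho)^{-1}$ created by the cutoff are then eliminated by the iteration Lemma~\ref{lem:Giusti2}. Sending $\varepsilon,\delta\to 0$, the uniform estimate and weak lower semicontinuity of the $W^{1,2}$ norm deliver a limit $v$ solving the frozen equation and enjoying the target bound. To return to $u$, note that on $\{|Du|\le 1\}$ one has $H_{p/2}(Du)\equiv 0$ and nothing must be shown, whereas on the open set $\{|Du|>1\}$ the frozen operator is non-degenerate and strict monotonicity forces $Du=Dv$ a.e.; hence $H_{p/2}(Du)$ coincides with $H_{p/2}(Dv)$ everywhere and inherits both the $W^{1,2}$ regularity and the displayed estimate. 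The hardest step is (III): the exponents in \eqref{s e gam 1}--\eqref{s e gam 2} are \emph{critical} for producing the $|h|^2$ factor, and since $\tau_h u$ is only controllable in $L^p$ by Lemma~\ref{lem:Giusti1} (not in $L^{p^*}$), the Hölder bookkeeping must place the Sobolev losses onto the powers of $|u|$ — any slack breaks absorption and spoils Lemma~\ref{lem:Giusti2}.
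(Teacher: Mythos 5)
Your proposal does not address the statement at all: Lemma~\ref{lem:RappIncre} is the elementary difference-quotient characterization of Sobolev functions (uniform $L^{q}$ bounds on $\Delta_{j,h}F$ imply $F\in W^{1,q}(B_{\rho})$, the norm bound $\Vert DF\Vert_{L^{q}(B_{\rho})}\leq M$, and the convergence $\Delta_{j,h}F\rightarrow\partial_{j}F$ in $L^{q}_{loc}$), which the paper does not prove but simply quotes from Giusti's book \cite[Lemma 8.2]{Giu}. What you wrote instead is a three-stage outline (freezing the datum, a priori estimate via the test function $\Delta_{-h}(\eta^{2}\Delta_{h}v)$, passage to the limit and identification with $u$) — that is a sketch of the proof of the main Theorem~\ref{teo3}, a completely different result; indeed your argument \emph{invokes} Lemma~\ref{lem:RappIncre} as a tool, so it cannot serve as its proof without circularity.

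The actual proof of the lemma is short and functional-analytic: since $1<q<\infty$, the hypothesis says the family $\{\Delta_{j,h}F\}_{|h|<(R-\rho)/2}$ is bounded in the reflexive space $L^{q}(B_{\rho})$, hence some sequence $\Delta_{j,h_{k}}F$ converges weakly to a limit $g_{j}\in L^{q}(B_{\rho})$ with $\Vert g_{j}\Vert_{L^{q}(B_{\rho})}\leq M$ by weak lower semicontinuity of the norm. For $\varphi\in C^{\infty}_{0}(B_{\rho})$ the discrete integration by parts of Proposition~\ref{prop}\,(ii) gives $\int F\,\Delta_{j,-h}\varphi\,dx=-\int\varphi\,\Delta_{j,h}F\,dx$, and since $\Delta_{j,-h}\varphi\rightarrow\partial_{j}\varphi$ uniformly, passing to the limit identifies $g_{j}=\partial_{j}F$ in the distributional sense, so $F\in W^{1,q}(B_{\rho},\mathbb{R}^{k})$ with the stated bound. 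The local strong convergence $\Delta_{j,h}F\rightarrow\partial_{j}F$ in $L^{q}_{loc}$ then follows from the integral representation $\Delta_{j,h}F(x)=\int_{0}^{1}\partial_{j}F(x+the_{j})\,dt$ together with the continuity of translations in $L^{q}$. None of these ingredients — weak compactness, duality with test functions, continuity of translations — appears in your proposal, so as a proof of this lemma it has a complete gap.
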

For further needs, we record the following
\begin{lem}
\label{lem:sob}
    Let $\Omega\subset\mathbb{R}^n$ be a bounded open set, $p\geq 2$ and $u\in W^{1,p}_{loc}(\Omega).$ Then the implication $$H_{p/2}(Du)\in W^{1,2}_{loc}(\Omega)\Longrightarrow |Du|\in L^{\frac{np}{n-2}}_{loc}(\Omega)$$ holds true, together with the estimate
$$\bigg(\int_{B_R}|Du|^{\frac{pn}{n-2}}dx\bigg)^{\frac{n-2}{n}}\leq C\bigg(\int_{B_R}|DH_{p/2}(Du)|^2dx+\int_{B_R}(1+|Du|^p)dx\bigg).$$
for every $B_R\Subset\Omega$
\end{lem}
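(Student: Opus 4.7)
The plan is to exploit the identity $|H_{p/2}(Du)| = (|Du|-1)_+^{p/2}$ together with the classical Sobolev embedding $W^{1,2}(B_R) \hookrightarrow L^{2^*}(B_R)$, where $2^* = \frac{2n}{n-2}$, and then to upgrade the estimate from $(|Du|-1)_+$ to $|Du|$ by a simple splitting between small and large values of $|Du|$. Note that $\frac{p}{2}\cdot 2^{*} = \frac{pn}{n-2}$, which is exactly the target exponent, so no iteration or Moser-style bootstrap is needed.

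First I would apply the standard Sobolev inequality to the function $v := H_{p/2}(Du)$, which by hypothesis belongs to $W^{1,2}_{loc}(\Omega)$. This gives
\[
\left(\int_{B_R}|H_{p/2}(Du)|^{\frac{2n}{n-2}}\,dx\right)^{\frac{n-2}{n}}\leq C\left(\int_{B_R}|DH_{p/2}(Du)|^{2}\,dx+\int_{B_R}|H_{p/2}(Du)|^{2}\,dx\right),
\]
for a constant $C=C(n,R)$. Using $|H_{p/2}(Du)|=(|Du|-1)_+^{p/2}\leq|Du|^{p/2}$ in the lower-order term on the right and the pointwise equality $|H_{p/2}(Du)|^{\frac{2n}{n-2}}=(|Du|-1)_+^{\frac{pn}{n-2}}$ on the left, this already controls the integral of $(|Du|-1)_+^{\frac{pn}{n-2}}$ by the right hand side of the claimed estimate.

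The second step is to pass from $(|Du|-1)_+$ to $|Du|$. Split $B_R$ into $\{|Du|\leq 2\}$ and $\{|Du|>2\}$: on the first set $|Du|^{\frac{pn}{n-2}}\leq 2^{\frac{pn}{n-2}}$, while on the second $|Du|-1>\tfrac{1}{2}|Du|$, hence $|Du|^{\frac{pn}{n-2}}\leq 2^{\frac{pn}{n-2}}(|Du|-1)_+^{\frac{pn}{n-2}}$. Integrating gives a pointwise-style bound
\[
|Du|^{\frac{pn}{n-2}}\leq C\bigl(1+(|Du|-1)_+^{\frac{pn}{n-2}}\bigr),
\]
with $C=C(n,p)$. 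Taking the $(n-2)/n$-th power of the integral and combining with the Sobolev inequality from the first step yields exactly
\[
\left(\int_{B_R}|Du|^{\frac{pn}{n-2}}\,dx\right)^{\frac{n-2}{n}}\leq C\left(\int_{B_R}|DH_{p/2}(Du)|^{2}\,dx+\int_{B_R}(1+|Du|^{p})\,dx\right),
\]
which is the desired estimate; in particular $|Du|\in L^{\frac{np}{n-2}}_{loc}(\Omega)$.

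There is no genuine obstacle here: the only point requiring a little care is that the Sobolev inequality is applied directly on $B_R$ without cutoff, which is legitimate because we are working on a ball and use the full $W^{1,2}(B_R)$ embedding (absorbing the $L^2$ norm of $v$ on the right). All constants depend only on $n,p$ and $R$, as stated in the lemma.
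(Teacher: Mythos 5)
Your proof is correct and follows essentially the same route as the paper: apply the Sobolev embedding to $H_{p/2}(Du)\in W^{1,2}_{loc}$, use the identity $|H_{p/2}(Du)|^{2^*}=(|Du|-1)_+^{\frac{pn}{n-2}}$, and then pass from $(|Du|-1)_+$ to $|Du|$ by an elementary splitting. You merely make explicit the final step that the paper dismisses with ``the conclusion easily follows.''
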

\begin{proof}
    By the Sobolev embedding theorem we have that $$H_{p/2}(Du)\in L^{2^*}_{loc}(B_R)$$ with the estimate $$\bigg(\int_{B_R}|H_{p/2}(Du)|^{2^*}dx\bigg)^{\frac{1}{2^*}}\leq C\bigg(\int_{B_R}|DH_{p/2}(Du)|^2dx+\int_{B_R}|H_{p/2}(Du)|^2dx\bigg)^{\frac{1}{2}}.$$ By the defintion of $H_{p/2}(\xi)$ in \eqref{eq:Hfun}, previous estimate yields
    $$\bigg(\int_{B_R}(|Du|-1)_+^{\frac{pn}{n-2}}dx\bigg)^{\frac{n-2}{n}}\leq C\bigg(\int_{B_R}|DH_{p/2}(Du)|^2dx+\int_{B_R}(|Du|-1)^p_+dx\bigg)^{\frac{1}{2}},$$ and from this inequality the conclusion easily follows.
\end{proof}

.\section{The regularized problem}
In this section we establish the higher differentiability of the solutions to an equation similar to that in \eqref{equazione mia} under higher regularity assumptions on the data $a(x)$ and $b(x,u)$ that will be instrumental for the proof of our main result. The main feature of this equation is that we freeze the right hand side $b(x,\cdot)$ calculating it on a fixed function belonging to $W^{1,p}_{loc}(\Omega)$. More precisely, let us fix a function $w\in W^{1,p}_{loc}(\Omega)$ and let $v\in W^{1,p}_{loc}(\Omega)$ be a solution of the following equation 
\begin{equation}
        \label{equazione nuova}
        \mathrm{div}\Biggl(a(x)(|Dv|-1)^{p-1}_+\frac{Dv}{|Dv|}\Biggr)=b(x,w) \text{ in }\Omega.
    \end{equation}
Our first aim is to prove the following
\begin{thm}
\label{teo1}
Assume that $a(x)$ satisfies $(ii)$ in \eqref{ipoa} and that for every ball $B_R\Subset\Omega$ there exists $L_{1,R}>0$ such that 
    \begin{equation}
        \label{ipoal}
\sup_{\substack{x,y\in B_R \\ x\not=y}}\frac{|a(x)-a(y)|}{|x-y|}\leq L_{1,R}.
    \end{equation}
Assume further that for every ball $B_R\Subset\Omega$ there exist constants $L_{2,R},L_{3,R}>0$ such that 
    \begin{align}
       \label{ipobforte1}
|b(x,s)-b(y,s)|&\leq L_{2,R}|x-y||s|^q,\\
\label{ipobforte2}
|b(x,s)-b(x,t)|&\leq L_{3,R}|s-t|(|s|^{q-1}+|t|^{q-1}), 
\end{align}
for every $x,y\in B_R$, every $s,t\in \mathbb{R}$ and where $q$ satisfies \eqref{q}. If $v\in W^{1,p}_{loc}(\Omega)$ solves \eqref{equazione nuova}, then $H_{p/2}(Dv)\in W^{1,2}_{loc}(\Omega)$ with the estimate
\begin{align*}
&\int_{B_\rho}|D(H_{p/2}(Dv))|^2dx\leq C\int_{B_R}(1+|Dv|^p)dx+C\bigg(\int_{B_R}|w|^p+|Dw|^pdx\bigg)^{\frac{q}{p-1}}.
\end{align*}
for all $B_\rho \subseteq B_{R/2}\subset B_R\Subset\Omega$ and $C=C(m, M, L_1, L_2, L_3, p, \rho, R),$
\end{thm}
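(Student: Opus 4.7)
\textbf{Proof plan for Theorem \ref{teo1}.}

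The plan is to run the difference quotient method in the direction of an arbitrary coordinate axis $e_j$, exploiting the Lipschitz regularity of $a$ and the quantified continuity of $b$. Starting from the weak formulation of \eqref{equazione nuova}, translating the test function $\varphi(\cdot-he_j)$, subtracting and dividing by $h$ yields the differenced identity
\begin{equation*}
\int_\Omega \langle \Delta_h[a\,H_{p-1}(Dv)], D\varphi\rangle\,dx \;=\; \int_\Omega \Delta_h b(\cdot,w)\,\varphi\,dx,
\end{equation*}
valid for every $\varphi\in W^{1,p}_0$ supported in $\Omega_{|h|}$. I would then fix concentric balls $B_\rho\subset B_{R/2}\subset B_R\Subset\Omega$, a cutoff $\eta\in C^\infty_c(B_{R/2})$ with $\eta\equiv 1$ on $B_\rho$ and $|D\eta|\leq c/(R/2-\rho)$, and test with $\varphi=\eta^2\Delta_h v$. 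Expanding $\Delta_h[aH_{p-1}(Dv)]=a(\cdot+he_j)\Delta_h H_{p-1}(Dv)+\Delta_h a\cdot H_{p-1}(Dv)$ and $D\varphi=\eta^2\Delta_h Dv+2\eta D\eta\,\Delta_h v$ decomposes the left-hand side into four natural pieces.

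The coercive piece $\int a(\cdot+he_j)\langle \Delta_h H_{p-1}(Dv),\Delta_h Dv\rangle\eta^2\,dx$ is bounded from below by $cm\int \eta^2|\Delta_h H_{p/2}(Dv)|^2\,dx$ thanks to the lower bound on $a$ and the monotonicity inequality \eqref{eq:BraAmb} of Lemma \ref{lem:Brasco}. All remaining pieces (the $D\eta$ term, and both $\Delta_h a$ terms) are error terms to be reabsorbed or controlled. For the $D\eta$ term I would use the upper bound for $|\Delta_h H_{p-1}(Dv)|$ from the second part of Lemma \ref{lem:Brasco}, Young's inequality with a small parameter $\varepsilon$ to reabsorb a factor $\varepsilon\int\eta^2|\Delta_h H_{p/2}(Dv)|^2$, and Lemma \ref{lem:Giusti1} together with H\"older's inequality to estimate the residual by a constant times $\int_{B_R}(1+|Dv|^p)\,dx$. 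The main obstacle, and the point where the Lipschitz assumption \eqref{ipoal} is crucial, is the cross term $\int \Delta_h a\cdot\langle H_{p-1}(Dv),\eta^2\Delta_h Dv\rangle\,dx$, because $\Delta_h Dv$ is not controlled by $\Delta_h H_{p/2}(Dv)$ on the degenerate set: here I would exploit that $H_{p-1}(Dv)$ vanishes where $|Dv|\leq 1$, so the integrand is supported in $\{|Dv|>1\}$, and on that set one can pass from $|\Delta_h Dv|$ to $|\Delta_h H_{p/2}(Dv)|$ via Lemma \ref{eq:BraAmb} (applied with $\alpha=1$, $\varepsilon=p/2$), then use Young's inequality with a small parameter, the boundedness $|\Delta_h a|\leq L_{1,R}$, and Lemma \ref{lem:Giusti1} to bound the remainder by $c\int_{B_R}(1+|Dv|^p)\,dx$.

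For the right-hand side I would decompose
\begin{equation*}
\Delta_h b(\cdot,w)\;=\;\frac{b(\cdot+he_j,w(\cdot+he_j))-b(\cdot,w(\cdot+he_j))}{h}\;+\;\frac{b(\cdot,w(\cdot+he_j))-b(\cdot,w(\cdot))}{h}
\end{equation*}
and apply \eqref{ipobforte1} and \eqref{ipobforte2} respectively, obtaining the pointwise bound $|\Delta_h b(\cdot,w)|\leq L_{2,R}|w(\cdot+he_j)|^q+L_{3,R}|\Delta_h w|(|w(\cdot+he_j)|^{q-1}+|w|^{q-1})$. Testing against $\eta^2\Delta_h v$ and using H\"older's inequality with an exponent triple dictated by \eqref{q}, together with Lemma \ref{lem:Giusti1} applied to $w$ and to $v$, produces a bound involving $\int_{B_R}(|w|^p+|Dw|^p)\,dx$ and $\int_{B_R}|Dv|^p\,dx$; the exponent condition \eqref{q} is precisely what makes the H\"older exponents admissible. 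Combining all estimates, choosing $\varepsilon$ small to reabsorb the $\eta^2|\Delta_h H_{p/2}(Dv)|^2$ contributions into the coercive term, and if necessary invoking Lemma \ref{lem:Giusti2} to handle iteration between radii $\rho$ and $R/2$, gives
\begin{equation*}
\sum_{j=1}^n\int_{B_\rho}|\Delta_{j,h} H_{p/2}(Dv)|^2\,dx\;\leq\; C\int_{B_R}(1+|Dv|^p)\,dx\;+\;C\Bigl(\int_{B_R}|w|^p+|Dw|^p\,dx\Bigr)^{\frac{q}{p-1}},
\end{equation*}
uniformly in $h$ small. Finally, Lemma \ref{lem:RappIncre} promotes this to $H_{p/2}(Dv)\in W^{1,2}_{loc}(\Omega)$ with the announced quantitative estimate after letting $h\to 0$.
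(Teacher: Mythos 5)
Your overall architecture (difference quotients, the splitting of the differenced identity into a coercive piece plus error terms, the treatment of the right-hand side via \eqref{ipobforte1}--\eqref{ipobforte2} and H\"older with exponents governed by \eqref{q}, reabsorption, and Lemma \ref{lem:RappIncre}) coincides with the paper's proof. However, there is a genuine gap at the point you yourself identify as "the main obstacle", namely the cross term
\begin{equation*}
I_1=\int_\Omega \zeta^2\,\tau_{s,h}a(x)\,\langle H_{p-1}(Dv),\tau_{s,h}Dv\rangle\,dx .
\end{equation*}
Your proposed fix --- restricting to the support of $H_{p-1}(Dv)$ and then "passing from $|\Delta_h Dv|$ to $|\Delta_h H_{p/2}(Dv)|$ via Lemma \ref{eq:BraAmb} with $\alpha=1$, $\varepsilon=p/2$" --- does not work. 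First, that lemma compares $|H_1(\xi)-H_1(\eta)|$ with $|H_{p/2}(\xi)-H_{p/2}(\eta)|$; it says nothing about $|\xi-\eta|$ itself. Second, no such comparison can hold: the support condition only forces $|Dv(x)|>1$ and leaves $Dv(x+he_s)$ unconstrained, and even when both vectors lie outside the unit ball the ratio $|\xi-\eta|/|H_{p/2}(\xi)-H_{p/2}(\eta)|$ is unbounded (take $\xi=(1+\delta)e_1$, $\eta=(1+\delta)e_2$ with $\delta\to0$: the numerator stays of order one while the denominator is of order $\delta^{p/2}$). A crude Young splitting of $|h|\,|H_{p-1}(Dv)|\,|\tau_{s,h}Dv|$ also fails, because the reabsorbable quantity must be the \emph{difference} $|\tau_{s,h}H_{p/2}(Dv)|^2$, not a pointwise power of $(|Dv|-1)_+$.

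The missing idea is the discrete product-rule identity \eqref{prodl}: since $\langle H_{p-1}(Dv),Dv\rangle=(|Dv|-1)_+^{p-1}|Dv|=(|Dv|-1)_+^{p}+(|Dv|-1)_+^{p-1}$, one writes
\begin{equation*}
\langle H_{p-1}(Dv),\tau_{s,h}Dv\rangle=\tau_{s,h}\big(H_p(|Dv|)\big)+\tau_{s,h}\big(H_{p-1}(|Dv|)\big)-\langle Dv(x+he_s),\tau_{s,h}\big(H_{p-1}(Dv)\big)\rangle,
\end{equation*}
so that only finite differences of the functions $H_p$, $H_{p-1}$, $H_{p-1}(Dv)$ appear --- and \emph{these} are genuinely controlled by $|\tau_{s,h}H_{p/2}(Dv)|$ times admissible weights, via Lemma \ref{eq:BraAmb} (with $\alpha=p/2$, $\varepsilon=p$) and the second estimate of Lemma \ref{lem:Brasco}. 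With this substitution the three resulting pieces are handled exactly as you handle the $D\eta$ term, and the rest of your argument goes through; without it the estimate of $I_1$ does not close. (A minor remark: the iteration Lemma \ref{lem:Giusti2} is not needed here; it only enters in the a priori estimate of Theorem \ref{teo2}, where the $W^{1,n}$ coefficient forces an interpolation between radii.)
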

\noindent \begin{proof}[\bfseries{Proof}]
Since $v$ is a weak of solution of \eqref{equazione nuova}, then
\begin{equation}
   \label{int}
   \int_\Omega \bigl\langle a(x)H_{p-1}(Dv),D\varphi\bigr\rangle \textbf{ }dx=\int_\Omega b(x,w)\varphi\textbf{ } dx \textbf{ }\text{ for all $\varphi \in W^{1,p}_0(\Omega)$}. 
\end{equation}
Fix a ball $B_R\Subset\Omega$ and let $\zeta \in C^\infty_0(B_{R/2})$ be a cut off function such that $0\leq\zeta\leq 1, \zeta=1$ in $B_{r/2}$ with $r\leq R$ and $|D\zeta|<\frac{C}{R-r}.$ In what follows, without loss of generality, we shall suppose that $R<1.$
Choosing $\varphi=\tau_{s,-h}(\zeta^2\tau_{s,h}v)$ as test function in \eqref{int} and integrating by parts by means of $(ii)$ in Proposition \ref{prop}, we get
$$\int_\Omega\biggl\langle\tau_{s,h}\bigg(a(x)H_{p-1}(Dv)\bigg),\zeta^2D(\tau_{s,h}v)+2\zeta D\zeta\tau_{s,h}v\biggr\rangle\textbf{ }dx=\int_\Omega \zeta^2 \tau_{s,h}(b(x,w))\tau_{s,h}v\textbf{ } dx.$$ 
By $(iii)$ in Proposition \ref{prop} and some elementary calculations, the previous equality can be written as follows
\begin{align*}
&0=\int_\Omega\zeta^2\bigl\langle\tau_{s,h}(a(x))H_{p-1}(Dv),\tau_{s,h}(Dv)\bigr\rangle\textbf{ }dx\\
&+\int_\Omega \zeta^2a(x+he_s)\bigl\langle\tau_{s,h}(H_{p-1}(Dv)),\tau_{s,h}(Dv)\bigr\rangle\textbf{ }dx\\
&+2\int_\Omega\zeta\bigl\langle\tau_{s,h}(a(x))H_{p-1}(Dv),\tau_{s,h}v D\zeta\bigr\rangle\textbf{ }dx\\
&+2\int_\Omega \zeta a(x+he_s)\bigl\langle\tau_{s,h}(H_{p-1}(Dv)),\tau_{s,h}v D\zeta\bigr\rangle\textbf{ }dx\\
&-\int_\Omega \zeta^2 \tau_{s,h}v[b(x+h,w(x+h))-b(x,w(x+h))]\textbf{ }dx\\
&-\int_\Omega \zeta^2 \tau_{s,h}v[b(x,w(x+h))-b(x,w(x))]\textbf{ }dx\\
& =:I_1+I_2+I_3+I_4-I_5-I_6,
\end{align*}
that yields 
\begin{equation}
\label{dis:genl}
    I_2\leq |I_1|+|I_3|+|I_4|+|I_5|+|I_6|.
\end{equation}  
The integral $I_2$ can be estimated using assumption $(ii)$ in \eqref{ipoa}  and the degenerate ellipticity of the equation given by the first estimate in Lemma \ref{lem:Brasco}, as follows
\begin{equation}
\label{i2l}
 I_2=\int_\Omega\zeta^2 a(x+he_s)\bigl\langle\tau_{s,h}(H_{p-1}(Dv)),\tau_{s,h}(Dv)\bigr\rangle dx\geq m\int_\Omega\zeta^2|\tau_{s,h}(H_{p/2}(Dv))|^2dx.   
\end{equation}
By $(iii)$ in Proposition \ref{prop} , we may write 
\begin{align}
\label{prodl}
\nonumber
    H_{p-1}(Dv)\tau_{s,h}Dv&=\tau_{s,h}\Big(H_{p-1}(Dv)\cdot Dv\Big)-Dv(x+he_s)\tau_{s,h}\Big(H_{p-1}(Dv)\Big)\\ \nonumber
    &=\tau_{s,h}\Big((|Dv|-1)^{p-1}_+|Dv|\Big)-Dv(x+he_s)\tau_{s,h}\Big(H_{p-1}(Dv)\Big)\\ \nonumber
    &=\tau_{s,h}\Big((|Dv|-1)^{p}_+\Big)+\tau_{s,h}\Big((|Dv|-1)^{p-1}_+\Big)-Dv(x+he_s)\tau_{s,h}\Big(H_{p-1}(Dv)\Big)\\
    &=\tau_{s,h}\Big(H_p(|Dv|)\Big)+\tau_{s,h}\Big(H_{p-1}(|Dv|)\Big)-Dv(x+he_s)\tau_{s,h}\Big(H_{p-1}(Dv)\Big),
\end{align} where we used the definition of $H_\gamma(\xi)$ both with $\gamma=p$ and $\gamma=p-1$ and the linearity of the finite difference operator.\\
For the estimate of $I_1$ we use \eqref{prodl}, thus getting

\begin{align}
\label{i1i2i3}
\nonumber
 I_1&\leq\int_\Omega\zeta^2|\tau_{s,h}(a(x))||\tau_{s,h}(H_{p}(|Dv|))|dx+\int_\Omega\zeta^2|\tau_{s,h}(a(x))||\tau_{s,h}(H_{p-1}(|Dv|))|dx\\\nonumber
&+\int_\Omega\zeta^2|\tau_{s,h}(a(x))|\bigg(|Dv(x+he_s)||\tau_{s,h}(H_{p-1}(Dv))|\bigg)dx\\
&=:|I_{1,1}|+|I_{1,2}|+|I_{1,3}|.
\end{align}
We now proceed estimating the integrals $|I_{1,j}|,\textbf{ }j=1,2,3$.\\
Using Lemma \ref{eq:BraAmb} with $\alpha=p/2$, $\varepsilon=p$, $\xi=Dv(x+he_s)$ and $\eta=Dv(x)$, we obtain
 \begin{equation}
 \label{3.8}
    |\tau_{s,h}(H_p(|Dv|))|\leq C\Big|\tau_{s,h}(H_{p/2}(|Dv|))\Big|\Big[(|Dv(x+he_s)|-1)^{p/2}_++(|Dv(x)|-1)^{p/2}_+\Big].
 \end{equation}
Using \eqref{3.8} and Young's inequality, we get

\begin{align}
\label{i11l}
\nonumber
|I_{1,1}|&\leq C\int_{\Omega}\zeta^2|\tau_{s,h}(a(x))|\Big|\tau_{s,h}H_{p/2}(|Dv|)\Big|\Big[(|Dv(x+he_s)|-1)^{p/2}_++(|Dv(x)|-1)^{p/2}_+\Big]dx, \\ \nonumber     
&\leq\varepsilon\int_\Omega\zeta^2\Big|\tau_{s,h}H_{p/2}(Dv)\Big|^2dx \\ \nonumber
&+C_\varepsilon\int_\Omega\zeta^2|\tau_{s,h}a(x)|^2\Big((|Dv(x+he_s)|-1)^p_++(|Dv(x)|-1)^p_+\Big) dx  \\ 
&\leq \varepsilon\int_\Omega\zeta^2\Big|\tau_{s,h}H_{p/2}(Dv)\Big|^2dx+C_\varepsilon(L_1)|h|^2\int_{B_R}(|Dv(x)|-1)^p_+dx,
\end{align}
where we used assumption  \eqref{ipoal}, the properties of $\zeta$, the second estimate in Lemma \ref{lem:Giusti1} and that
 \begin{align}
 \label{taul}
 |\tau_{s,h}H_{p/2}(|Dv|)|
     &\leq |\tau_{s,h}H_{p/2}(Dv)|.
 \end{align}
To estimate $|I_{1,2}|$, we use the second estimate in Lemma \ref{lem:Brasco}, assumption \eqref{ipoal}, inequality \eqref{taul} and the properties of $\zeta$ as follows
\begin{align}
\label{jaydefl}
\nonumber
|I_{1,2}|&\leq C\int_\Omega\zeta^2|\tau_{s,h}(a(x))|\bigg|\tau_{s,h}(H_{p/2}(Dv))\bigg|\bigg(\bigg|H_{p/2}(Dv(x+he_s))\bigg|^{\frac{p-2}{p}}+\bigg|H_{p/2}(Dv(x))\bigg|^{\frac{p-2}{p}}\bigg)dx\\ \nonumber
&\leq L_1|h|\int_\Omega\zeta^2\Big|\tau_{s,h}(H_{p/2}(Dv))\Big|\Big(\Big|H_{p/2}(Dv(x+he_s))\Big|^{\frac{p-2}{p}}+\Big|H_{p/2}(Dv(x))\Big|^{\frac{p-2}{p}}\Big)dx\\ \nonumber
&\leq \varepsilon\int_\Omega\zeta^2|\tau_{s,h}H_{p/2}(Dv)|^2dx +C_{\varepsilon}(L_1)|h|^2\int_{B_{R/2}}\Big(|H_{p/2}(Dv(x+he_s))^{\frac{p-2}{p}}+H_{p/2}(Dv(x))|^{\frac{p-2}{p}}\Big)^2dx\\ \nonumber
&\leq\varepsilon\int_\Omega\zeta^2|\tau_{s,h}H_{p/2}(Dv)|^2dx+C_\varepsilon(L_1)|h|^2\int_{B_{R}}|H_{p/2}(Dv)|^{\frac{2(p-2)}{p}}dx\\\nonumber
&\leq \varepsilon\int_\Omega\zeta^2|\tau_{s,h}H_{p/2}(Dv)|^2dx+C_\varepsilon(L_1)|h|^2\int_{B_{R}}(|Dv|-1)^{p-2}_+dx\\
&\leq \varepsilon\int_\Omega\zeta^2|\tau_{s,h}H_{p/2}(Dv)|^2dx+C_\varepsilon(L_1)|h|^2\int_{B_{R}}(1+|Dv(x)|)^pdx,
\end{align}
where we also used Young's inequality, the definition of $H_{p/2}(Dv)$ and the second estimate in Lemma \ref{lem:Giusti1}.\\
Using the second estimate in Lemma \ref{lem:Brasco}, Young's inequality, assumption \eqref{ipoal} and the properties of $\zeta$, we get
 
\begin{align}
\label{i12l} \nonumber
|I_{1,3}|&\leq\int_\Omega\zeta^2|\tau_{s,h}a(x)||Dv(x+he_s)|\Big|\tau_{s,h}(H_{p/2}(Dv))\Big|\Big|H_{p/2}(Dv(x+he_s))^{\frac{p-2}{p}}+H_{p/2}(Dv)^{\frac{p-2}{p}}\Big|dx\\ \nonumber
&\leq\varepsilon\int_\Omega \zeta^2|\tau_{s,h}H_{p/2}(Dv)|^2dx\\ \nonumber
&+C_\varepsilon\int_\Omega\zeta^2|\tau_{s,h}a(x)|^2|Dv(x+he_s)|^2\Big||H_{p/2}(Dv(x+he_s))|^{\frac{p-2}{p}}+|H_{p/2}(Dv)|^{\frac{p-2}{p}}\Big|^2dx\\ \nonumber
&\leq\varepsilon\int_\Omega\zeta^2|\tau_{s,h}H_{p/2}(Dv)|^2dx\\ \nonumber
&+C_\varepsilon(L_1)|h|^2\int_{B_{R/2}}|Dv(x+he_s)|^2\Big||H_{p/2}(Dv(x+he_s))|^{\frac{p-2}{p}}+|H_{p/2}(Dv)|^{\frac{p-2}{p}}\Big|^2dx\\\nonumber
&\leq\varepsilon\int_\Omega \zeta^2|\tau_{s,h}H_{p/2}(Dv)|^2dx\\ \nonumber
&+C_\varepsilon(L_1)|h|^2\int_{B_{R/2}}|Dv(x+he_s)|^2\Big(|H_{p/2}(Dv(x+he_s))|^{\frac{2(p-2)}{p}}+|H_{p/2}(Dv(x))|^{\frac{2(p-2)}{p}}\Big)dx\\ 
&\leq \varepsilon\int_\Omega \zeta^2|\tau_{s,h}H_{p/2}(Dv)|^2dx+C_\varepsilon(L_1)|h|^2\int_{B_R}|Dv|^pdx,
\end{align}
where in the last line we used the definition of $H_{p/2}(Dv)$ and second estimate in Lemma \ref{lem:Giusti1}.\\
Combining \eqref{i11l}, \eqref{jaydefl} and \eqref{i12l} we obtain the estimate of $|I_1|$, i.e.
\begin{align}
\label{i1l} 
|I_1|&\leq 3\varepsilon\int_\Omega\zeta^2|\tau_{s,h}H_{p/2}(Dv)|^2dx
+C_\varepsilon|h|^2\int_{B_R}(1+|Dv|^p)dx.
\end{align}
We now proceed with the estimate of $|I_3|$. Using \eqref{ipoal}, that $|D\zeta|\leq \frac{C}{R-r}$ and H\"older's inequality, we get
\begin{align}
\label{i3l}
\nonumber
|I_3|&\leq 2\int_\Omega\zeta |D\zeta||\tau_{s,h}(a(x))|(|Dv|-1)^{p-1}_+|\tau_{s,h}v|\textbf{ }dx\\\nonumber
&\leq \frac{C|h|}{R-r}\int_{B_R}(|Dv|-1)^{p-1}_+|\tau_{s,h}v|dx\\ \nonumber
&\leq \frac{C|h|^2}{R-r}\bigg(\int_{B_R}|Dv|^pdx\bigg)^{\frac{1}{p}}\bigg(\int_{B_R}(|Dv|-1)^{p}_+dx\bigg)^{\frac{p-1}{p}}\\
&\leq \frac{C|h|^2}{R-r}\int_{B_R}|Dv|^pdx,
\end{align}
where we also used the first estimate in Lemma \ref{lem:Giusti1}.\\
For the estimate of $|I_4|,$ we use that $|D\zeta|\leq\frac{C}{R-r}$, the second estimate in Lemma \ref{lem:Brasco} again, assumption $(ii)$ in \eqref{ipoa} and Young's inequality, as follows
\begin{align}
\label{i4l} \nonumber
|I_4|&\leq 2\int_\Omega\zeta |D\zeta| |a(x+he_s)||\tau_{s,h}(H_{p-1}(Dv))||\tau_{s,h}v|\textbf{ }dx\\ \nonumber
 &\leq \frac{C(M)}{R-r}\int_\Omega\zeta|\tau_{s,h}(H_{p/2}(Dv))||H_{p/2}(Dv(x+he_s))^{\frac{p-2}{p}}+H_{p/2}(Dv(x))^{\frac{p-2}{p}}||\tau_{s,h}v|\textbf{ }dx \\  \nonumber
 &\leq \varepsilon\int_\Omega \zeta^2|\tau_{s,h}(H_{p/2}(Dv))|^2dx\\ \nonumber
 &+\frac{C_\varepsilon(M)}{R-r}\int_{B_{R/2}} \Big|H_{p/2}(Dv(x+he_s))^{\frac{p-2}{p}}+H_{p/2}(Dv(x))^{\frac{p-2}{p}}\Big|^2|\tau_{s,h}v|^2\textbf{ }dx\\\nonumber
 &\leq \varepsilon\int_\Omega \zeta^2|\tau_{s,h}(H_{p/2}(Dv))|^2dx\\ \nonumber
 &+ \frac{C_\varepsilon(M)}{R-r}\bigg(\int_{B_{R/2}}|\tau_{s,h}v|^pdx\bigg)^\frac{2}{p}\bigg(\int_{B_{R/2}}\Big|H_{p/2}(Dv(x+he_s))^{\frac{p-2}{p}}+H_{p/2}(Dv(x))^{\frac{p-2}{p}}\Big|^{\frac{2p}{p-2}}dx\bigg)^{\frac{p-2}{p}} \\ \nonumber
 &\leq \varepsilon\int_\Omega \zeta^2|\tau_{s,h}(H_{p/2}(Dv))|^2dx\\ \nonumber
&+ \frac{C_\varepsilon(M)}{R-r}|h|^2\bigg(\int_{B_R}|Dv|^pdx\bigg)^{\frac{2}{p}}\bigg(\int_{B_R}|H_{p/2}(Dv(x))|^{2}dx\bigg)^{\frac{p-2}{p}}\\ \nonumber
&= \varepsilon\int_\Omega \zeta^2|\tau_{s,h}(H_{p/2}(Dv))|^2dx\\ \nonumber
&+\frac{C_\varepsilon(M)}{R-r}|h|^2\bigg(\int_{B_R}|Dv|^pdx\bigg)^{\frac{2}{p}}\bigg(\int_{B_R}|H_{p/2}(Dv(x))|^2dx\bigg)^{\frac{p-2}{p}}\\ 
&\leq \varepsilon\int_\Omega \zeta^2|\tau_{s,h}(H_{p/2}(Dv))|^2dx+ \frac{C_\varepsilon(M)}{R-r}|h|^2\int_{B_{R}}(1+|Dv|^p)dx,
\end{align}
where we also used H\"older's inequality, Lemma \ref{lem:Giusti1} and the definition of $H_{p/2}(Dv)$.\\
We now proceed to estimate $|I_5|$, arguing as follows
\begin{align*}
    |I_5|&\leq \int_\Omega \zeta^2 |\tau_{s,h}v|\Big|(b(x+h,w(x+h))-b(x,w(x+h)))\Big|\textbf{ }dx\\ 
    &\leq L_2|h|\int_{B_{R/2}}|\tau_{s,h}v||w(x+h)|^qdx \\ 
    &\leq L_2|h|^2\bigg(\int_{B_{R}}|Dv|^pdx\bigg)^{\frac{1}{p}}\bigg(\int_{B_R}|w|^{\frac{qp}{p-1}}dx\bigg)^{\frac{p-1}{p}},
\end{align*}
where we used assumption \eqref{ipobforte1}, H\"older's inequality and Lemma \ref{lem:Giusti1}. Note that the use of H\"older's inequality is legitimate since $$\frac{qp}{p-1}<p^*\iff q<\frac{p^*(p-1)}{p}$$ which is true by assumption \eqref{q}. Therefore, we can estimate $|I_5|$ further by the use of Sobolev embedding theorem thus getting
\begin{align}
 \label{i5l} \nonumber   
   |I_5| &\leq C(L_2,R)|h|^2\bigg(\int_{B_{R}}|Dv|^pdx\bigg)^{\frac{1}{p}}\bigg(\int_{B_R}|w|^{p^*}dx\bigg)^{\frac{q}{p^*}}\\\nonumber
    &\leq C(L_2,R)|h|^2 \bigg(\int_{B_{R}}|Dv|^pdx\bigg)^{\frac{1}{p}}\bigg(\int_{B_R}(|Dw|^p+|w|^p)dx\bigg)^{\frac{q}{p}}\\
    &\leq C(L_2,R)|h|^2 \bigg(\int_{B_{R}}(1+|Dv|^p)dx\bigg)+C|h|^2\bigg(\int_{B_R}(|Dw|^p+|w|^p)dx\bigg)^{\frac{q}{p-1}},
\end{align} 
where we used Young's inequality in the last line.
To estimate $|I_6|$ we use assumptions \eqref{ipobforte2}, \eqref{q} and H\"older's inequality, thus getting 
\begin{align}
\label{i6l}\nonumber
|I_6|&\leq \int_\Omega\zeta^2|\tau_{s,h}v||b(x,w(x+h))-b(x,w(x))|dx\\\nonumber
&\leq L_3\int_{B_{R/2}}|\tau_{s,h}v||\tau_{s,h}w|\bigg(|w(x)|^{q-1}+|w(x+h)|^{q-1}\bigg)dx\\\nonumber
&\leq C(L_3)|h|^2\bigg(\int_{B_R}|Dv|^pdx\bigg)^{\frac{1}{p}}\bigg(\int_{B_R}|Dw|^pdx\bigg)^{\frac{1}{p}}\bigg(\int_{B_R}|w|^{\frac{(q-1)p}{p-2}}dx\bigg)^{\frac{p-2}{p}}\\ 
\nonumber    
&\leq  C(L_3)|h|^2\bigg(\int_{B_R}|Dv|^pdx\bigg)^{\frac{1}{p}}\bigg(\int_{B_R}(|Dw|^p+|w|^p)dx\bigg)^{\frac{1}{p}}\bigg(\int_{B_R}|w|^{p^*}dx\bigg)^{\frac{q-1}{p^*}}\\\nonumber
&\leq C(L_3)|h|^2\bigg(\int_{B_R}|Dv|^pdx\bigg)^{\frac{1}{p}}\bigg(\int_{B_R}(|Dw|^p+|w|^p)dx\bigg)^{\frac{1}{p}}\bigg(\int_{B_R}(|Dw|^p+|w|^{p})dx\bigg)^{\frac{q-1}{p}}\\\nonumber
&\leq C(L_3)|h|^2\bigg(\int_{B_R}|Dv|^pdx\bigg)^{\frac{1}{p}}\bigg(\int_{B_R}(|Dw|^p+|w|^{p})dx\bigg)^{\frac{q}{p}}\\
&\leq C(L_3)|h|^2\bigg(\int_{B_R}(1+|Dv|^p)dx\bigg)+C|h|^2\bigg(\int_{B_R}(|Dw|^p+|w|^{p})dx\bigg)^{\frac{q}{p-1}},
\end{align}
where we used again the first estimate in Lemma \ref{lem:Giusti1}, Sobolev embedding theorem and Young's inequality. Also here, we note that $$\frac{(q-1)p}{p-2}<p^* \iff q-1<\frac{p^*(p-2)}{p}\iff q<1+\frac{p^*(p-2)}{p}$$ which is precisely assumption \eqref{q}. At this point we insert estimates \eqref{i2l}, \eqref{i1l}, \eqref{i3l}, \eqref{i4l}, \eqref{i5l} and \eqref{i6l} in \eqref{dis:genl}, to get
\begin{align*}
&m\int_\Omega\zeta^2|\tau_{s,h}(H_{p/2}(Dv))|^2dx\leq 4\varepsilon\int_\Omega\zeta^2|\tau_{s,h}H_{p/2}(Dv)|^2dx\\
&+|h|^2\frac{C_\varepsilon}{{R-r}}\int_{B_R}(1+|Dv|^p)dx+C|h|^2\bigg(\int_{B_R}|Dw|^pdx+|w|^pdx\bigg)^{\frac{q}{p-1}}.
\end{align*}
Choosing $\varepsilon=\frac{m}{8}$, we can reabsorb the first integral in the right hand side of the previous estimate by the left hand side. Using also that $\zeta=1$ in $B_{r/2}$, dividing both sides of inequality by $|h|^2$ and letting $|h|$ goes to $0$, we get
\begin{align}
\label{finel}
&\int_{B_{r/2}}|D(H_{p/2}(Dv))|^2dx\leq C\int_{B_R}(1+|Dv|^p)dx+C\bigg(\int_{B_R}(|w|^p+|Dw|^p)dx\bigg)^{\frac{q}{p-1}},
\end{align}
where $r\leq R$ and $C=C(m, M, L_1, L_2, L_3, p, r, R)$, i.e. the conclusion.
\end{proof}
In particular, as a Corollary of Theorem \ref{teo1}, we have a first regularity result for the solution $u$ of \eqref{equazione mia} under the same regularity assumptions on $a(x)$ and $b(x,u)$. Indeed, the same proof works in particular if $v=w=u$ and this yields 
\begin{cor}
Assume that $a(x)$ and $b(x,s)$ satisfy \eqref{ipoal}, \eqref{ipobforte1} and \eqref{ipobforte2} and that $q$ satisfies \eqref{q}. If $u\in W^{1,p}_{loc}(\Omega)$ solves \eqref{equazione mia}, then $H_{p/2}(Du)\in W^{1,2}_{loc}(\Omega)$ holds true with the estimate
\begin{align*}
&\int_{B_\rho}|D(H_{p/2}(Du))|^2dx\leq C\int_{B_R}(1+|Du|^p)dx+C\bigg(\int_{B_R}|u|^p+|Du|^pdx\bigg)^{\frac{q}{p-1}}.
\end{align*}
for all $B_\rho \subseteq B_{R/2}\subset B_R\Subset\Omega$ and $C=C(m, M, L_1, L_2, L_3, p, \rho, R).$    
\end{cor}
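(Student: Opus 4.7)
The plan is to derive the corollary as an immediate specialization of Theorem \ref{teo1}. My first observation is that equation \eqref{equazione mia} is obtained from the frozen equation \eqref{equazione nuova} by the single choice $w = u$: under this substitution the right-hand side $b(x, w)$ becomes $b(x, u)$, which is exactly the right-hand side of \eqref{equazione mia}. Consequently, any weak solution $u \in W^{1,p}_{loc}(\Omega)$ of \eqref{equazione mia} is automatically a weak solution of \eqref{equazione nuova} with the choice $v = w = u$.

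Next I would check that all the hypotheses of Theorem \ref{teo1} are in force. The Lipschitz assumption \eqref{ipoal} on $a(x)$, the two Lipschitz-type conditions \eqref{ipobforte1} and \eqref{ipobforte2} on $b(x, s)$, and the exponent restriction \eqref{q} are assumed directly in the corollary. The only remaining requirement is that the ``frozen'' function $w$ lies in $W^{1,p}_{loc}(\Omega)$; this is trivially satisfied because $w = u$ and $u \in W^{1,p}_{loc}(\Omega)$ by hypothesis.

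With these observations in hand, applying Theorem \ref{teo1} to the pair $v = w = u$, on any pair of concentric balls $B_\rho \subseteq B_{R/2} \subset B_R \Subset \Omega$, immediately yields both the membership $H_{p/2}(Du) \in W^{1,2}_{loc}(\Omega)$ and the displayed estimate, with the same dependence of the constant $C$ on $m$, $M$, $L_1$, $L_2$, $L_3$, $p$, $\rho$, $R$. Since the derivation is a pure specialization, there is no real obstacle; the only item worth noting is that in the proof of Theorem \ref{teo1} the function $w$ intervenes only in the estimates of $I_5$ and $I_6$, where its contribution is controlled solely by $\|w\|_{W^{1,p}(B_R)}$. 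This quantity is evidently finite when $w = u$, so the argument of Theorem \ref{teo1} transfers verbatim and the corollary follows.
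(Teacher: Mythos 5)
Your proposal is correct and coincides with the paper's own argument: the corollary is obtained by applying Theorem \ref{teo1} with the specialization $v=w=u$, noting that all hypotheses are assumed directly and that $w=u\in W^{1,p}_{loc}(\Omega)$ by hypothesis. Nothing further is needed.
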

\section{A priori estimate}
This section is devoted to the proof of an a priori estimate for the solutions to equation \eqref{equazione nuova} under the weaker assumptions \eqref{q} and \eqref{ipob1}, that is the first of the main points in our proof. More precisely, we assume a priori that $H_{p/2}(Dv)\in W^{1,2}_{loc}(\Omega)$ and we estimate the $L^2$ norm of the derivative of $H_{p/2}(Dv)$ with a constant that depends only on the $W^{1,p}$ norm of $w$, the $L^s$ norm of $k$ and the $L^\gamma$ norm of $h.$ 
  
\begin{thm}
\label{teo2}
   Let $w\in W^{1,p}_{loc}(\Omega)$ be fixed and $v\in W^{1,p}_{loc}(\Omega)$ be a weak solution of the equation \eqref{equazione nuova}, under assumptions \eqref{ipoa}, \eqref{q}, \eqref{ipob1}, \eqref{s e gam 1} and \eqref{s e gam 2}. If $H_{p/2}(Dv)\in W^{1,2}_{loc}(\Omega)$ then there exists $R_0=R_0(||Da||_{L^n},n,p)$ such that the following estimate 
\begin{align*}
&\int_{B_{R/2}}|D(H_{p/2}(Dv))|^2dx\leq C\int_{B_R}(1+|Dv|^p)dx\\
&+C\bigg(\int_{B_{R}}|w|^{p}+|Dw|^pdx\bigg)^{\frac{q}{p-1}}\bigg[\bigg(\int_{B_R}|k(x)|^sdx\bigg)^{\frac{p}{s(p-1)}}+\bigg(\int_{B_R}|h(x)|^\gamma dx\bigg)^{\frac{p}{\gamma(p-1)}}\bigg]  
\end{align*}
    holds for every $B_R\subset B_{R_0}\Subset\Omega$ and a constant $C=C(n, p, q, s,\gamma, m, M, R).$
\end{thm}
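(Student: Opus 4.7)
My plan is to follow the blueprint of Theorem \ref{teo1}, with two fundamental modifications dictated by the weaker regularity of the data. I would plug the test function $\varphi = \tau_{s,-h}(\zeta^2 \tau_{s,h} v)$ into the weak formulation of \eqref{equazione nuova}, where $\zeta$ is a standard cut-off supported between nested balls $B_\rho \subset B_r \subset B_R$ with $R\leq R_0$ to be chosen, and arrive at the same identity $I_1+I_2+I_3+I_4=I_5+I_6$ as in Theorem \ref{teo1}. The ellipticity term $I_2$ is lower-bounded by $m\int \zeta^2 |\tau_h H_{p/2}(Dv)|^2\,dx$ via Lemma \ref{lem:Brasco}, which will furnish the left-hand side after dividing by $|h|^2$ and letting $h\to 0$ through Lemma \ref{lem:RappIncre}.

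The first novelty is the treatment of $I_1, I_3, I_4$, which all carry an increment $\tau_h a$. In place of the pointwise bound $|\tau_h a|\leq L|h|$ used in Theorem \ref{teo1}, I would invoke the $L^n$ difference quotient estimate $\int |\tau_h a|^n \leq |h|^n \int |Da|^n$ together with H\"older's inequality with exponents $(n,2,\tfrac{2n}{n-2})$, exploiting the crucial a priori hypothesis $H_{p/2}(Dv)\in W^{1,2}_{loc}$, which by Lemma \ref{lem:sob} yields $|Dv|\in L^{np/(n-2)}_{loc}$. The expected outcome is, for $j\in\{1,3,4\}$,
\[
|I_j| \leq \varepsilon \int \zeta^2 |\tau_h H_{p/2}(Dv)|^2\,dx + C_\varepsilon |h|^2 \|Da\|^2_{L^n(B_R)} \left(\int_{B_R} |DH_{p/2}(Dv)|^2\,dx + \int_{B_R}(1+|Dv|^p)\,dx\right).
\]

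The second novelty concerns $I_5$ and $I_6$, where assumption \eqref{ipob1} replaces the pointwise Lipschitz control of Theorem \ref{teo1}. For $I_5$ I apply H\"older with three factors in $L^s$, $L^{p^*/q}$ and $L^{np/(n-2)}$: condition \eqref{s e gam 1} is precisely $\tfrac{1}{s}+\tfrac{q}{p^*}+\tfrac{n-2}{np}=1$, so the exponents close; the three factors are bounded respectively by $\|k\|_{L^s(B_R)}$, $\|w\|_{L^{p^*}(B_R)}^q$ (controlled by $\|w\|_{W^{1,p}(B_R)}^q$ via Sobolev), and $\|\tau_h v\|_{L^{np/(n-2)}(B_R)}\leq C|h|\,\|Dv\|_{L^{np/(n-2)}(B_R)}$ (via Lemma \ref{lem:Giusti1} and Lemma \ref{lem:sob}). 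An analogous four-factor H\"older with exponents summing to $\tfrac{1}{\gamma}+\tfrac{q-1}{p^*}+\tfrac{1}{p}+\tfrac{n-2}{np}=1$, i.e.\ \eqref{s e gam 2}, handles $I_6$. A final Young's inequality with conjugates $p$ and $p/(p-1)$ reproduces exactly the product structure $\big(\int_{B_R}(|w|^p+|Dw|^p)\big)^{q/(p-1)}\big[(\int|k|^s)^{p/(s(p-1))}+(\int|h|^\gamma)^{p/(\gamma(p-1))}\big]$ appearing in the statement, together with further small terms $\varepsilon|h|^2\int_{B_R}|DH_{p/2}(Dv)|^2\,dx$ to be absorbed.

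After dividing by $|h|^2$, passing to the limit via Lemma \ref{lem:RappIncre}, and collecting all contributions, one arrives at a Caccioppoli-type inequality of the form
\[
\int_{B_\rho}|DH_{p/2}(Dv)|^2\,dx \leq \Big(\varepsilon+C_\varepsilon\|Da\|^2_{L^n(B_R)}\Big)\int_{B_r}|DH_{p/2}(Dv)|^2\,dx + \frac{C}{(r-\rho)^{\alpha}}\,\mathcal{G},
\]
where $\mathcal{G}$ denotes the good right-hand side and $\alpha>0$ arises from the $|D\zeta|$ factors. The hard part, and the source of the restriction $R\leq R_0$, is the absorption of the large-ball term $\|Da\|^2_{L^n(B_R)}\int_{B_r}|DH_{p/2}(Dv)|^2$: by absolute continuity of $\int|Da|^n$, I would fix $R_0=R_0(\|Da\|_{L^n},n,p)$ so small that $C_\varepsilon\|Da\|^2_{L^n(B_{R_0})}\leq \tfrac{1}{4}$ and then choose $\varepsilon=\tfrac{1}{4}$, making the coefficient of the large-ball term strictly below $1$. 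Applying the iteration Lemma \ref{lem:Giusti2} to concentric radii $\rho<r$ in $[R/2,R]$ then closes the argument and delivers the claimed estimate on $B_{R/2}$.
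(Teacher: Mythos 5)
Your proposal follows essentially the same route as the paper: the same test function and decomposition as Theorem \ref{teo1}, the same replacement of the pointwise Lipschitz bound on $a$ by the $L^n$ difference-quotient estimate combined with H\"older and Lemma \ref{lem:sob}, the same H\"older exponent bookkeeping from \eqref{s e gam 1}--\eqref{s e gam 2} for $I_5$ and $I_6$ followed by Young with conjugates $p$ and $p/(p-1)$, and the same absorption of the $\|Da\|_{L^n(B_{R_0})}$-weighted large-ball term via absolute continuity of the integral and the iteration Lemma \ref{lem:Giusti2}. The only slip is that $I_4$ carries the factor $a(x+he_s)$ (bounded by $M$), not the increment $\tau_{s,h}a$, so it is estimated exactly as in Theorem \ref{teo1} without any $W^{1,n}$ machinery; this does not affect the argument.
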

\noindent \begin{proof}[\bfseries{Proof}]
Our starting point is inequality \eqref{dis:genl} in Theorem \ref{teo1} that obviously holds true with the same choice of the test function in \eqref{int}.\\ Here we fix a ball $B_R\Subset\Omega$, radii $R/2\leq \rho_1\leq s\leq t \leq\rho_2\leq R<1$ and let $\zeta \in C^\infty_0(B_t)$ be a cut off function such that $0\leq\zeta\leq 1, \zeta=1$ in $B_s$ and $|D\zeta|<\frac{C}{t-s}.$
For the estimate of $I_1$, we use \eqref{prodl} and \eqref{taul}, thus getting

\begin{equation}
    |I_1|\leq|I_{1,1}|+|I_{1,2}|+|I_{1,3}|.
\end{equation}
where  $|I_{1,j}|,\textbf{ }j=1,2,3$ are defined in \eqref{i1i2i3}.
We now proceed estimating the integrals $|I_{1,j}|$.
\noindent
Arguing as we did in \eqref{i11l} we get 
\begin{align*}
|I_{1,1}|&\leq\varepsilon\int_\Omega\zeta^2\big|\tau_{s,h}H_{p/2}(Dv)\big|^2dx \\ 
&+C_\varepsilon\int_\Omega|\tau_{s,h}a(x)|^2(|Dv(x+he_s)|-1)^p_++(|Dv(x)|-1)^p_+dx  \\ 
&\leq\varepsilon\int_\Omega\zeta^2\big|\tau_{s,h}H_{p/2}(Dv)\big|^2dx \\ 
&+ C_\varepsilon\bigg(\int_{B_t}|\tau_{s,h}a|^ndx\bigg)^{\frac{2}{n}}\bigg(\int_{B_t}(|Dv(x+he_s)|-1)^{\frac{pn}{n-2}}_++(|Dv(x)|-1)^{\frac{pn}{n-2}}_+dx\bigg)^{\frac{n-2}{n}}\\ 
&\leq\varepsilon\int_\Omega\zeta^2\big|\tau_{s,h}H_{p/2}(Dv)\big|^2dx + C_\varepsilon|h|^2\bigg(\int_{B_{R}}|Da|^n\bigg)^\frac{2}{n}\bigg(\int_{B_{\rho_2}}(|Dv(x)|-1)^{\frac{pn}{n-2}}_+dx\bigg)^{\frac{n-2}{n}},
\end{align*}
where we used in turn, assumption $(i)$ in \eqref{ipoa}, H\"older's inequality, the properties of $\zeta$ and Lemma \ref{lem:Giusti1}. The a priori assumption $H_{p/2}(Dv)\in W^{1,2}_{loc}(\Omega)$ allows us to use Lemma \ref{lem:sob}, thus getting 
\begin{align}
\label{i11} \nonumber
|I_{1,1}|&\leq\varepsilon\int_\Omega\zeta^2\big|\tau_{s,h}H_{p/2}(Dv)\big|^2dx+ C_\varepsilon|h|^2\bigg(\int_{B_R}|Da|^ndx\bigg)^{\frac{2}{n}}\int_{B_{\rho_2}}|DH_{p/2}(Dv)|^2dx\\
&+C_\varepsilon|h|^2\bigg(\int_{B_R}|Da|^ndx\bigg)^{\frac{2}{n}}\int_{B_R}|Dv|^pdx.
\end{align}
Now we proceed with the estimate of $|I_{1,2}|$. Arguing as we did in \eqref{jaydefl}, we get
\begin{align}
\label{i12}\nonumber
|I_{1,2}|&\leq \varepsilon\int_\Omega\zeta^2|\tau_{s,h}H_{p/2}(Dv)|^2dx \\\nonumber
&+C_\varepsilon\bigg(\int_{B_t}|\tau_{s,h}(a(x))|^ndx\bigg)^{\frac{2}{n}}\bigg(\int_{B_t}\bigg(\bigg||H_{p/2}(Dv(x+he_s))|^{\frac{p-2}{p}}+|H_{p/2}(Dv(x))|^{\frac{p-2}{p}}\bigg|\bigg)^{\frac{2n}{n-2}}dx\bigg)^\frac{n-2}{n}\\ \nonumber
&\leq \varepsilon\int_\Omega\zeta^2|\tau_{s,h}H_{p/2}(Dv)|^2dx+ C_\varepsilon|h|^2\bigg(\int_{B_R}|Da|^ndx\bigg)^{\frac{2}{n}}\bigg(\int_{B_{\rho_2}}|H_{p/2}(Dv(x))|^{\frac{p-2}{p}\frac{2n}{n-2}}dx\bigg)^{\frac{n-2}{n}}\\ \nonumber
&\leq \varepsilon\int_\Omega\zeta^2|\tau_{s,h}H_{p/2}(Dv)|^2dx+ C_\varepsilon|h|^2\bigg(\int_{B_R}|Da|^ndx\bigg)^{\frac{2}{n}}\bigg(\int_{B_{\rho_2}}(|Dv|-1)_+^{\frac{(p-2)n}{n-2}}dx\bigg)^{\frac{n-2}{n}}\\ \nonumber
&\leq \varepsilon\int_\Omega\zeta^2|\tau_{s,h}H_{p/2}(Dv)|^2dx+ C_\varepsilon|h|^2\bigg(\int_{B_R}|Da|^ndx\bigg)^{\frac{2}{n}}\bigg(\int_{B_{\rho_2}}\bigg(1+(|Dv|-1)_+^{\frac{pn}{n-2}}\bigg)dx\bigg)^{\frac{n-2}{n}}\\ \nonumber
&\leq \varepsilon\int_\Omega\zeta^2|\tau_{s,h}H_{p/2}(Dv)|^2dx+C_\varepsilon|h|^2\bigg(\int_{B_R}|Da|^ndx\bigg)^{\frac{2}{n}}\\
&+ C_\varepsilon|h|^2\bigg(\int_{B_R}|Da|^ndx\bigg)^{\frac{2}{n}}\bigg(\int_{B_{\rho_2}}|DH_{p/2}(Dv)|^2dx+\int_{B_{\rho_2}}(1+|Dv|^p)dx\bigg),
\end{align}
where we used Lemma \ref{lem:Brasco}, assumption \eqref{ipoa}, H\"older's inequality and Lemma \ref{lem:sob}.
Now we estimate $|I_{1,3}|$, arguing as in \eqref{i12l} as follows
\begin{align}
\label{i13} \nonumber
|I_{1,3}|
&\leq\varepsilon\int_\Omega \zeta^2|\tau_{s,h}H_{p/2}(Dv)|^2dx+ C_\varepsilon\int_\Omega\zeta^2|\tau_{s,h}a(x)|^2|Dv(x+he_s)|^pdx\\ \nonumber
&+C_\varepsilon\int_\Omega\zeta^2|\tau_{s,h}a(x)|^2\Big(|H_{p/2}(Dv(x+he_s))|^2+|H_{p/2}(Dv(x))^2|\Big)dx\\ \nonumber
&\leq\varepsilon\int_\Omega \zeta^2|\tau_{s,h}H_{p/2}(Dv)|^2dx+C_\varepsilon\bigg(\int_\Omega\zeta^2|\tau_{s,h}a(x)|^ndx\bigg)^{\frac{2}{n}}\bigg(\int_{B_t}|Dv(x+he_s)|^{\frac{np}{n-2}}dx\bigg)^{\frac{n-2}{np}}\\ \nonumber
&+C_\varepsilon\int_\Omega\zeta^2|\tau_{s,h}a(x)|^2\Big(|H_{p/2}(Dv(x+he_s))|^2+|H_{p/2}(Dv(x))^2|\Big)dx\\ \nonumber
&\leq\varepsilon\int_\Omega \zeta^2|\tau_{s,h}H_{p/2}(Dv)|^2dx\\
\nonumber
&+C_\varepsilon|h|^2\bigg(\int_{B_R}|Da(x)|^ndx\bigg)^{\frac{2}{n}}\bigg(\int_{B_{\rho_2}}|DH_{p/2}(Dv)|^2dx+\int_{B_R}|Dv|^pdx\bigg)^{\frac{1}{p}}\\\nonumber
&+C_\varepsilon\int_\Omega\zeta^2|\tau_{s,h}a(x)|^2\Big(|H_{p/2}(Dv(x+he_s))|^2+|H_{p/2}(Dv(x))^2|\Big)dx\\ \nonumber
&\leq\varepsilon\int_\Omega \zeta^2|\tau_{s,h}H_{p/2}(Dv)|^2dx+C_\varepsilon|h|^2\bigg(\int_{B_R}|Da|^ndx\bigg)^{\frac{2}{n}}\int_{B_{\rho_2}}|DH_{p/2}(Dv)|^2dx\\&+C_\varepsilon|h|^2\bigg(\int_{B_R}|Da|^ndx\bigg)^{\frac{2}{n}}\int_{B_R}|Dv|^pdx,
\end{align}
where we used Young's inequality, H\"older's inequality, Lemmas \ref{lem:Giusti1}, \ref{lem:sob} and in the last line, we argue as in \eqref{i11}.\\
Combining \eqref{i11}, \eqref{i12} and \eqref{i13}, we obtain
\begin{align}
\label{i1} \nonumber
|I_1|&\leq 3\varepsilon\int_\Omega\zeta^2|\tau_{s,h}H_{p/2}(Dv)|^2dx\\ \nonumber
&+C_\varepsilon|h|^2\bigg(\int_{B_R}|Da|^ndx\bigg)^{\frac{2}{n}}\bigg(\int_{B_R}(1+|Dv|^p)dx\bigg)\\
&+C_\varepsilon|h|^2\bigg(\int_{B_R}|Da|^ndx\bigg)^{\frac{2}{n}}\bigg(\int_{B_{\rho_2}}|DH_{p/2}(Dv)|^2dx\bigg).
\end{align}
We now proceed with the estimate of $|I_3|$, where $I_3$ is been defined in \eqref{dis:genl}. By the properties of $|D\zeta|$ we have
\begin{align*}
|I_3|&\leq 2\int_{B_t}|D\zeta||\tau_{s,h}a(x)||\tau_{s,h}v|(|Dv|-1)^{p-1}_+dx\\
&\leq \frac{C}{t-s}\int_{B_t}|\tau_{s,h}a(x)||\tau_{s,h}v|(|Dv|-1)^{p-1}_+dx.
\end{align*} 
Since $$\frac{1}{n}+\frac{1}{p}+\frac{(n-2)(p-1)}{np}<1,$$ we can use H\"older's inequality with exponents $n, p, \frac{np}{(n-2)(p-1)}$ to deduce that
\begin{align}
\label{i3}\nonumber
|I_3|&\leq \frac{C(R)}{t-s}\bigg(\int_{B_t}|\tau_{s,h}a|^ndx\bigg)^{\frac{1}{n}}\bigg(\int_{B_t}|\tau_{s,h}v|^pdx\bigg)^{\frac{1}{p}}\bigg(\int_{B_t}(|Dv|-1)_+^{\frac{np}{n-2}}dx\bigg)^{\frac{(n-2)(p-1)}{np}}\\   \nonumber
&\leq \frac{C|h|^2}{t-s}\bigg(\int_{B_R}|Da|^ndx\bigg)^{\frac{1}{n}}\bigg(\int_{B_R}|Dv|^pdx\bigg)^{\frac{1}{p}}\bigg(\int_{B_{\rho_2}}|D(H_{p/2}(Dv))|^2dx+\int_{B_{\rho_2}}(1+|Dv|^p)dx\bigg)^{\frac{p-1}{p}}\\ \nonumber
&\leq\sigma|h|^2\int_{B_{\rho_2}}|D(H_{p/2}(Dv))|^2dx+\sigma|h|^2\int_{B_R}(1+|Dv|^p)dx\\ 
&+\frac{C_\sigma|h|^2}{(t-s)^p}\bigg(\int_{B_R}|Da|^ndx\bigg)^{\frac{p}{n}}\bigg(\int_{B_R}|Dv|^pdx\bigg),
\end{align}
where $\sigma>0$ will be chosen later, and we also used Lemmas \ref{lem:Giusti1}, \ref{lem:sob} and Young's inequality.\\
\noindent
To estimate $|I_5|$, we use assumption \eqref{s e gam 1} that allows us to apply H\"older's inequality with exponents $\frac{np}{n-2},$ $\frac{p^*}{q}$ and $s$, that is legitimate  by virtue of assumption \eqref{q}, where $s$ has been defined in \eqref{s e gam 1}.
\begin{align*}
|I_5|&\leq |h|\int_{B_{t}}|\tau_{s,h}v||(k(x+h)+k(x))||w(x+h)|^{q}dx\\
&\leq C(R)|h|\bigg(\int_{B_{t}}|\tau_{s,h}v|^{\frac{pn}{n-2}}dx\bigg)^{\frac{n-2}{pn}}\bigg(\int_{B_{\rho_2}}|w|^{p^*}dx\bigg)^{\frac{q}{p^*}}\bigg(\int_{B_{\rho_2}}|k(x)|^sdx\bigg)^{\frac{1}{s}}\\ 
&\leq|h|^2\bigg(\int_{B_{\rho_2}}|Dv|^{\frac{pn}{n-2}}dx\bigg)^{\frac{n-2}{pn}}\bigg(\int_{B_{\rho_2}}|w|^{p^*}dx\bigg)^{\frac{q}{p^*}}\bigg(\int_{B_{\rho_2}}|k(x)|^{s}dx\bigg)^{\frac{1}{s}},
\end{align*}
where we used assumption $(i)$ in \eqref{ipob1} and Lemma \ref{lem:Giusti1}. 
Using also Lemma \ref{lem:sob}, Young's inequality and Sobolev embedding theorem, we obtain
\begin{align}
\label{i5}\nonumber
|I_5|&\leq |h|^2\bigg(\int_{B_{\rho_2}}|DH_{p/2}(Dv)|^2dx+\int_{B_{R}}|Dv|^pdx\bigg)^{\frac{1}{p}}\bigg(\int_{B_{R}}|w|^{p^*}dx\bigg)^{\frac{q}{p^*}}\bigg(\int_{B_{\rho_2}}|k(x)|^s dx\bigg)^{\frac{1}{s}}\\ \nonumber
&\leq\sigma|h|^2\int_{B_{\rho_2}}|DH_{p/2}(Dv)|^2dx+\sigma|h|^2\int_{B_{R}}|Dv|^pdx\\
&+C_\sigma|h|^2\bigg(\int_{B_{R}}|w|^{p}dx+\int_{B_{R}}|Dw|^pdx\bigg)^{\frac{q}{p-1}}\bigg(\int_{B_{\rho_2}}k(x)^s dx\bigg)^{\frac{p}{s(p-1)}}.
\end{align}
For the estimate of $|I_6|$, we use assumption \eqref{s e gam 2}, and we apply H\"older's inequality with exponents $\frac{np}{n-2},$ $p,$ $\frac{p^*}{q-1}$ and  $\gamma$, that is legitimate by assumption \eqref{q} and where $\gamma$ as been defined in \eqref{s e gam 2}. More precisely, we get
\begin{align}
\label{i6}
\nonumber
|I_6|&\leq \int_\Omega\zeta^2|\tau_{s,h}v||\tau_{s,h}w||w(x+h)+w(x)|^{q-1}|h(x+h)|dx \\ \nonumber
&\leq \bigg(\int_{B_t}|\tau_{s,h}v|^{\frac{pn}{n-2}}dx\bigg)^{\frac{n-2}{np}}\bigg(\int_{B_t}|\tau_{s,h}w|^{p}dx\bigg)^{\frac{1}{p}}\bigg(\int_{B_{\rho_2}}|w(x)|^{p^*}dx\bigg)^{\frac{q-1}{p^*}}\bigg(\int_{B_{\rho_2}} |h(x)|^\gamma dx \bigg)^{\frac{1}{\gamma}}\\ \nonumber
&\leq |h|^2\bigg(\int_{B_{\rho_2}}|Dv|^{\frac{pn}{n-2}}dx\bigg)^{\frac{n-2}{pn}}\bigg(\int_{B_{\rho_2}}|Dw|^{p}dx\bigg)^{\frac{1}{p}}\bigg(\int_{B_{\rho_2}}|w(x)|^{p^*}dx\bigg)^{\frac{q-1}{p^*}}\bigg(\int_{B_{\rho_2}}|h(x)|^\gamma dx\bigg)^{\frac{1}{\gamma}}\\\nonumber
&\leq |h|^2\bigg(\int_{B_{\rho_2}}|Dv|^{\frac{pn}{n-2}}dx\bigg)^{\frac{n-2}{pn}}\bigg(\int_{B_{\rho_2}}(|Dw|^{p}+|w(x)|^p)dx\bigg)^{\frac{q}{p}}\bigg(\int_{B_{\rho_2}}|h(x)|^\gamma dx\bigg)^{\frac{1}{\gamma}}\\\nonumber
&\leq\sigma|h|^2\int_{B_{\rho_2}}|DH_{p/2}(Dv)|^2dx+\sigma|h|^2\int_{B_{R}}|Dv|^pdx\\ 
&+C_\sigma|h|^2\bigg(\int_{B_{R}}(|w|^{p}+|Dw|^p)dx\bigg)^{\frac{q}{p-1}}\bigg(\int_{B_{\rho_2}}|h(x)|^\gamma dx\bigg)^{\frac{p}{\gamma(p-1)}},
\end{align}
where we used the properties of $\zeta$, Lemmas \ref{lem:Giusti1}, \ref{lem:sob} and Young's inequality.
Combining \eqref{i5} and \eqref{i6} we obtain
\begin{align}
\label{i5+i6}\nonumber
|I_5|+|I_6|&\leq 2\sigma|h|^2\int_{B_{\rho_2}}|DH_{p/2}(Dv)|^2dx+2\sigma|h|^2\int_{B_{R}}(1+|Dv|^p)dx\\\nonumber
&+C_\sigma|h|^2\bigg(\int_{B_{R}}|w|^{p}dx+\int_{B_R}|Dw|^pdx\bigg)^{\frac{q}{p-1}}\bigg(\int_{B_R}|k(x)|^sdx\bigg)^{\frac{p}{s(p-1)}}\\ 
&+C_\sigma|h|^2\bigg(\int_{B_{R}}|w|^{p}dx+\int_{B_R}|Dw|^pdx\bigg)^{\frac{q}{p-1}}\bigg(\int_{B_R}|h(x)|^\gamma dx\bigg)^{\frac{p}{\gamma(p-1)}}.
\end{align}The estimates of $I_2$ and $I_4$ are the same as in the previous Theorem at \eqref{i2l} and \eqref{i4l} respectively.
Next, we insert the estimates \eqref{i1}, \eqref{i2l}, \eqref{i3}, \eqref{i4l} and \eqref{i5+i6} in \eqref{dis:genl}, to get
\begin{align}
\label{prefine}\nonumber
&m\int_\Omega\zeta^2|\tau_{s,h}(H_{p/2}(Dv))|^2dx\leq 4\varepsilon\int_\Omega\zeta^2|\tau_{s,h}H_{p/2}(Dv)|^2dx \\\nonumber 
&+|h|^2\bigg(C_\varepsilon\bigg(\int_{B_R}|Da|^ndx\bigg)^{\frac{2}{n}}+3\sigma\bigg)\int_{B_{\rho_2}}|D(H_{p/2}(Dv))|^2dx\\ \nonumber
&+\frac{C_\sigma|h|^2}{(t-s)^p}\bigg(\int_{B_R}|Da|^ndx\bigg)^{\frac{p}{n}}\bigg(\int_{B_R}|Dv|^pdx\bigg)\\ \nonumber
&+|h|^2\bigg(\frac{C_\varepsilon}{t-s}+3\sigma+C\bigg(\int_{B_R}|Da|^ndx\bigg)^{\frac{2}{n}}\bigg)\int_{B_R}(1+|Dv|^p)dx\\\nonumber
&+C_\sigma|h|^2\bigg(\int_{B_{R}}|w|^{p}dx+\int_{B_R}|Dw|^pdx\bigg)^{\frac{q}{p-1}}\bigg(\int_{B_R}|k(x)|^sdx\bigg)^{\frac{p}{s(p-1)}}\\ 
&+C_\sigma|h|^2\bigg(\int_{B_{R}}|w|^{p}dx+\int_{B_R}|Dw|^pdx\bigg)^{\frac{q}{p-1}}\bigg(\int_{B_R}|h(x)|^\gamma dx\bigg)^{\frac{p}{\gamma(p-1)}}.
\end{align}
Choosing $\varepsilon=\frac{m}{8}$, we can reabsorb the first integral in the right hand side of the previous estimate by the left hand side. Using also that $\zeta=1$ in $B_s$, dividing both sides of inequality by $|h|^2$ and letting $|h|$ goes to $0$, by the a priori assumption $H_{p/2}(Du)\in W^{1,2}_{loc}(\Omega),$ we get
\begin{align*}
&\frac{m}{2}\int_{B_{\rho_1}}|D(H_{p/2}(Dv))|^2dx\leq\frac{m}{2}\int_{B_s}|D(H_{p/2}(Dv))|^2dx\\
&\leq \bigg(\tilde{C}\bigg(\int_{B_R}|Da|^ndx\bigg)^{\frac{2}{n}}+3\sigma\bigg)\int_{B_{\rho_2}}|D(H_{p/2}(Dv))|^2dx\\
&+\bigg(\frac{C}{t-s}+3\sigma+C\bigg(\int_{B_R}|Da|^ndx\bigg)^{\frac{2}{n}}\bigg)\int_{B_R}(1+|Dv|^p)dx\\
&+\frac{C}{(t-s)^p}\bigg(\int_{B_R}|Da|^ndx\bigg)^{\frac{p}{n}}\bigg(\int_{B_R}|Dv|^pdx\bigg)\\
&+C_\sigma\bigg(\int_{B_{R}}|w|^{p}dx+\int_{B_R}|Dw|^pdx\bigg)^{\frac{q}{p-1}}\bigg(\int_{B_R}|k(x)|^sdx\bigg)^{\frac{p}{s(p-1)}}\\ 
&+C_\sigma\bigg(\int_{B_{R}}|w|^{p}+|Dw|^pdx\bigg)^{\frac{q}{p-1}}\bigg(\int_{B_R}|h(x)|^\gamma dx\bigg)^{\frac{p}{\gamma(p-1)}}.
\end{align*}
By the absolute continuity of the integral there exists $R_0$ such that $$\bigg(\int_{B_{R_0}}|Da|^ndx\bigg)^{\frac{2}{n}}<\frac{1}{4\tilde{C}},$$ hence, choosing $R<R_0$, we have
\begin{equation}
\tilde{C}\bigg(\int_{B_R}|Da|^ndx\bigg)^{\frac{2}{n}}+3\sigma\leq \frac{1}{4}+3\sigma,
\end{equation}
and so, choosing $\sigma=\frac{1}{12}$, we have that \begin{equation}
\tilde{C}\bigg(\int_{B_R}|Da|^ndx\bigg)^{\frac{2}{n}}+3\sigma\leq \frac{1}{2}
\end{equation}
Moreover, choosing $t$ and $s$ such that $$\frac{1}{t-s}\cong\frac{1}{\rho_2-\rho_1},$$ estimate \eqref{prefine} can be written as follows 
\begin{align*}
\frac{m}{2}\int_{B_{\rho_1}}|D(H_{p/2}(Dv))|^2dx&\leq \frac{1}{2}\int_{B_{\rho_2}}|D(H_{p/2}(Dv))|^2dx\\
&+\bigg(\frac{C}{t-s}+\frac{1}{2}\bigg)\int_{B_R}(1+|Dv|^p)dx+\frac{C}{(t-s)^p}\bigg(\int_{B_R}|Dv|^p\bigg)\\
&+C_\sigma\bigg(\int_{B_{R}}|w|^{p}dx+\int_{B_R}|Dw|^pdx\bigg)^{\frac{q}{p-1}}\bigg(\int_{B_R}|k(x)|^sdx\bigg)^{\frac{p}{s(p-1)}}\\ 
&+C_\sigma\bigg(\int_{B_{R}}|w|^{p}+|Dw|^pdx\bigg)^{\frac{q}{p-1}}\bigg(\int_{B_R}|h(x)|^\gamma dx\bigg)^{\frac{p}{\gamma(p-1)}}.  
\end{align*}
At this point we use the iteration Lemma \ref{lem:Giusti2} with $Z(\rho)=\int_{B_\rho}|D(H_{p/2}(Du))|^2dx$, that yields
\begin{align}
\label{fine}\nonumber
\int_{B_{R/2}}|D(H_{p/2}(Dv))|^2dx&\leq C\int_{B_R}(1+|Dv|^p)dx+C\bigg(\int_{B_{R}}|w|^{p}+|Dw|^pdx\bigg)^{\frac{q}{p-1}}\bigg(\int_{B_R}|k(x)|^sdx\bigg)^{\frac{p}{s(p-1)}}\\
&+C\bigg(\int_{B_{R}}|w|^{p}+|Dw|^pdx\bigg)^{\frac{q}{p-1}}\bigg(\int_{B_R}|h(x)|^\gamma dx\bigg)^{\frac{p}{\gamma(p-1)}}
\end{align} i.e. the conclusion.
\end{proof}In particular, as a corollary of Theorem \ref{teo2}, we have an a priori estimate for the solution $u$ of \eqref{equazione mia}. Indeed, the same proof works in particular if $v=w=u$ and this yields 
\begin{cor}
Let $u\in W^{1,p}_{loc}(\Omega)$ be a solution of \eqref{equazione mia}, under assumptions \eqref{ipoa}, \eqref{q}, \eqref{ipob1}, \eqref{s e gam 1} and \eqref{s e gam 2}. If $H_{p/2}(Du)\in W^{1,2}_{loc}(\Omega)$ then there exist $R_0=R_0(||Da||_{L^n},n,p)$ such that the following estimate 
\begin{align*}
&\int_{B_{R/2}}|D(H_{p/2}(Du))|^2dx\leq C\int_{B_R}(1+|Du|^p)dx\\
&+C\bigg(\int_{B_{R}}|u|^{p}+|Du|^pdx\bigg)^{\frac{q}{p-1}}\bigg(\int_{B_R}|k(x)|^sdx\bigg)^{\frac{p}{s(p-1)}}\\
&+C\bigg(\int_{B_{R}}|u|^{p}+|Du|^pdx\bigg)^{\frac{q(n-p)+p}{n(p-1)}}\bigg(\int_{B_R}|h(x)|^\gamma dx\bigg)^{\frac{p}{\gamma(p-1)}}\bigg.,   
\end{align*}
    holds for every $B_R\subset B_{R_0}\Subset\Omega$ and a constant $C=C(n, p, q, s,\gamma, m, M, R).$
\end{cor}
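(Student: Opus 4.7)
The plan is to combine the higher differentiability result of Theorem \ref{teo1} with the uniform a priori estimate of Theorem \ref{teo2} through a regularization-and-freezing procedure, and then identify the resulting limit function with $u$ via monotonicity. Fix $B_R\Subset\Omega$ with $R<R_0$, where $R_0$ is supplied by Theorem \ref{teo2}. First, mollify the coefficient by setting $a_\varepsilon=a*\rho_\varepsilon$: this is smooth (hence Lipschitz on $B_R$, satisfying \eqref{ipoal}), still obeys $m<a_\varepsilon<M$, and has $\|Da_\varepsilon\|_{L^n(B_R)}$ uniformly controlled by $\|Da\|_{L^n}$. Similarly, mollify the datum in the $x$-variable only to obtain $b_\varepsilon(x,z)$ satisfying the strong Lipschitz conditions \eqref{ipobforte1} and \eqref{ipobforte2} required by Theorem \ref{teo1}, while inheriting \eqref{ipob1} with densities $k_\varepsilon=k*\rho_\varepsilon$ and $h_\varepsilon=h*\rho_\varepsilon$ whose $L^s$ and $L^\gamma$ norms are dominated by those of $k$ and $h$. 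Then consider the frozen Dirichlet problem
\begin{equation*}
\mathrm{div}\bigl(a_\varepsilon(x)H_{p-1}(Dv_\varepsilon)\bigr)=b_\varepsilon(x,u)\ \text{in } B_R,\qquad v_\varepsilon-u\in W^{1,p}_0(B_R),
\end{equation*}
whose unique solution exists by standard monotone operator theory, since the right-hand side is an $L^{p'}$ function of $x$ alone.

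With $w=u$ fixed, Theorem \ref{teo1} applies to $v_\varepsilon$ and yields $H_{p/2}(Dv_\varepsilon)\in W^{1,2}_{loc}(B_R)$, verifying the a priori assumption of Theorem \ref{teo2}. Applying the latter, and observing that $a_\varepsilon, b_\varepsilon, k_\varepsilon, h_\varepsilon$ satisfy the weak assumptions \eqref{ipoa}, \eqref{q}, \eqref{ipob1}, \eqref{s e gam 1}, \eqref{s e gam 2} uniformly in $\varepsilon$, produces an $\varepsilon$-independent estimate
\begin{equation*}
\int_{B_{R/2}}|D(H_{p/2}(Dv_\varepsilon))|^2\,dx\le C\int_{B_R}(1+|Dv_\varepsilon|^p)\,dx+C\bigg(\int_{B_R}|u|^p+|Du|^p\,dx\bigg)^{\frac{q}{p-1}}\Theta_R,
\end{equation*}
where $\Theta_R$ collects the $k$- and $h$-terms appearing in the statement of Theorem \ref{teo3}. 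A standard energy estimate, testing the frozen equation with $v_\varepsilon-u\in W^{1,p}_0(B_R)$ and invoking the coercivity encoded in Lemma \ref{lem:Lind}, then provides a uniform $W^{1,p}(B_R)$ bound on $v_\varepsilon$ in terms of $\|u\|_{W^{1,p}(B_R)}$, $\|k\|_{L^s(B_R)}$ and $\|h\|_{L^\gamma(B_R)}$, which closes the uniform a priori bound.

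Up to subsequences, $v_\varepsilon\rightharpoonup\bar v$ weakly in $W^{1,p}(B_R)$ with $\bar v-u\in W^{1,p}_0(B_R)$, and $H_{p/2}(Dv_\varepsilon)\rightharpoonup H_{p/2}(D\bar v)$ weakly in $W^{1,2}(B_{R/2})$. A Minty--Browder argument based on the monotonicity of $H_{p-1}$ from Lemma \ref{lem:Brasco} identifies the weak limit of $a_\varepsilon H_{p-1}(Dv_\varepsilon)$ and shows that $\bar v$ is itself a weak solution of $\mathrm{div}(a(x)H_{p-1}(D\bar v))=b(x,u)$ in $B_R$; weak lower semicontinuity then transfers the uniform estimate above to $H_{p/2}(D\bar v)$. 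Since $u$ also solves the same equation with the same trace $u$ on $\partial B_R$, subtracting the two weak formulations and testing with $\bar v-u\in W^{1,p}_0(B_R)$ gives
\begin{equation*}
\int_{B_R}a(x)\bigl\langle H_{p-1}(D\bar v)-H_{p-1}(Du),D\bar v-Du\bigr\rangle\,dx=0,
\end{equation*}
so the monotonicity inequality \eqref{eq:BraAmb} combined with $a(x)\ge m>0$ forces $H_{p/2}(D\bar v)=H_{p/2}(Du)$ almost everywhere in $B_R$. The two gradients may still differ on the degenerate set $\{|Du|\le 1\}$, but there $H_{p/2}$ vanishes identically, so the identity holds globally. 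Hence $H_{p/2}(Du)\in W^{1,2}_{loc}(\Omega)$ inherits the estimate proved for $H_{p/2}(D\bar v)$, which is exactly the conclusion of Theorem \ref{teo3}. The step I expect to be the main obstacle is the limit passage in the nonlinearity $a_\varepsilon H_{p-1}(Dv_\varepsilon)$: wide degeneracy prevents strong $L^p$ convergence of $Dv_\varepsilon$ on $\{|D\bar v|\le 1\}$, and identifying the weak limit requires a careful Minty-type argument that exploits the quantitative monotonicity in Lemma \ref{lem:Brasco}.
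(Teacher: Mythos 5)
The statement here is the corollary that follows Theorem \ref{teo2}, not Theorem \ref{teo3} itself, and the paper's proof of it is a one-line specialization: since $u$ solves \eqref{equazione mia}, it is in particular a weak solution of the frozen equation \eqref{equazione nuova} with the choice $w=u$, and the corollary's hypothesis $H_{p/2}(Du)\in W^{1,2}_{loc}(\Omega)$ is exactly the a priori assumption required by Theorem \ref{teo2}; taking $v=w=u$ there immediately yields the stated estimate. Your proposal never uses the hypothesis $H_{p/2}(Du)\in W^{1,2}_{loc}(\Omega)$ at all. Instead you rebuild the entire approximation scheme of the final section of the paper (mollified coefficient and datum, frozen Dirichlet problems, limit passage, identification of the limit with $u$ via monotonicity), i.e.\ you are attempting to prove the unconditional Theorem \ref{teo3} rather than the conditional corollary. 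That is a much harder task than what is asked, and it misses the one observation that makes the corollary trivial.

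Viewed as a proof of the stronger statement, your outline is close in spirit to the paper's Section 5 but leaves its hardest step as an assertion. The paper adds an $\varepsilon|Du_\varepsilon|^{p-2}Du_\varepsilon$ term to the operator, which restores strict monotonicity (hence uniqueness of $u_\varepsilon$) and is what actually drives the energy estimate; your appeal to Lemma \ref{lem:Lind} for coercivity of the purely degenerate operator is not the right tool, and ``uniqueness'' of your $v_\varepsilon$ fails without such a regularization (two solutions may have different gradients on the degenerate set). More seriously, the limit passage in the nonlinearity is exactly the point you flag as the main obstacle and do not carry out: the paper does not use a Minty--Browder argument but instead exploits the uniform $W^{1,2}_{loc}$ bound on $H_{p/2}(Du_\varepsilon)$ to obtain strong $L^2_{loc}$ convergence, and then controls $H_{p-1}(Dv)-H_{p-1}(Du_\varepsilon)$ via Lemma \ref{eq:BraAmb} and H\"older's inequality. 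As written, therefore, the proposal both overshoots the target and contains a genuine gap at its crucial step; for the corollary at hand none of this machinery is needed.
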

\section{Proof of Theorem \ref{teo3}}
The aim of this section is to conclude the proof of Theorem \ref{teo3} by using a suitable approximation argument. More precisely, let $u\in W^{1,p}_{loc}(\Omega)$ be a local solution of \eqref{equazione mia}, fix a ball $B_R\Subset\Omega$ and assume without loss of generality $R\leq 1.$
For $\varepsilon>0,$ let $u_\varepsilon\in W^{1,p}(B_R)$ be the unique solution of the following problem
\begin{equation}
    \label{equ regolare}
    \left\{
    \begin{array}{lll}
    \mathrm{div}\bigg(a_\varepsilon(x)H_{p-1}(Du_\varepsilon)+\varepsilon|Du_\varepsilon(x)|^{p-2}Du_\varepsilon\bigg) = b_\varepsilon(x,u) & \text{in } B_R \\[6pt]
    u_\varepsilon  = u & \text{on } \partial B_R
        
    \end{array}
    \right.
\end{equation}
where
\begin{itemize}
\item $a_\varepsilon(x):=a(x)\ast \rho_\varepsilon$, 
    \item $b_\varepsilon:=b\ast\rho_\varepsilon$, i.e. $b_\varepsilon (x,s)=\int_{B_1}b(x+\varepsilon w,s)\cdot\rho(w)dw$
\end{itemize}
with $\{\rho_\varepsilon\}_{\varepsilon>0}$ a family of standard compactly supported $C^\infty$ mollifiers.\\
Observe that the function $b_\varepsilon(x,u)$ satisfies assumption $(i)$ in \eqref{ipob1}, indeed 
\begin{align}\nonumber
\label{5star}
|b_\varepsilon(x,s)-b_\varepsilon(y,s)|&=\bigg|\int_{B_1}b(x+\varepsilon w,s)-b(y+\varepsilon w,s)dw\bigg|\\ \nonumber
&\leq \int_{B_1}|b(x+\varepsilon w,s)-b(y+\varepsilon w,s)|dw\\ \nonumber
&\leq |x-y||s|^q\bigg[\int_{B_1}k(x+\varepsilon w)dw+\int_{B_1}k(y+\varepsilon w)dw\bigg]\\
&=|x-y|(k_\varepsilon(x)+k_\varepsilon(y))|s|^q,
\end{align} where $k_\varepsilon$ is the mollifiers of $k$. Hence, the function $b_\varepsilon(x,u)$ satisfies also \eqref{ipobforte1}, in fact
\begin{equation}
\label{prima condi}
|b_\varepsilon(x,s)-b_\varepsilon(y,s)|\leq L_{2,\varepsilon}|x-y||s|^q  
\end{equation}
where $L_{2,\varepsilon}=2||k_\varepsilon||_{L^\infty(B_R)}.$ 
The same observations can be applied to get assumption $(ii)$ in \eqref{ipob1}, that is
\begin{align}
\label{5starstar}
 \nonumber
|b_\varepsilon(x,s)-b_\varepsilon(x,t)|&=\bigg|\int_{B_1}b(x+\varepsilon w,s)-b(x+\varepsilon w,t)dw\bigg|\\ \nonumber
&\leq \int_{B_1}|b(x+\varepsilon w,s)-b(x+\varepsilon w,t)|dw\\\nonumber
&\leq |s-t|(|s|^{q-1}+|t|^{q-1})\int_{B_1}h(x+\varepsilon w)dw\\ 
&=|s-t|(|s|^{q-1}+|t|^{q-1})h_\varepsilon(x),
\end{align} where $h_\varepsilon$ is the mollifiers of $h$. Hence, the function $b_\varepsilon(x,u)$ satisfies also \eqref{ipobforte2}, in fact
\begin{equation}
\label{seconda condi}
|b_\varepsilon(x,s)-b_\varepsilon(x,t)| \leq L_{3,\varepsilon}|s-t|(|s|^{q-1}+|t|^{q-1}),  
\end{equation}
where $L_{3,\varepsilon}=||h_\varepsilon||_{L^\infty(B_R)}.$
Since $a_\varepsilon$ is Lipschitz continuous and by virtue of \eqref{prima condi} and \eqref{seconda condi}, we can use Theorem \ref{teo1} to derive that $H_{p/2}(Du_\varepsilon)\in W_{loc}^{1,2}(B_R)$. Hence, by virtue of \eqref{5star} and \eqref{5starstar}, we can use Theorem \ref{teo2} with $u_\varepsilon$ in place of $v$ and $u$ in place of $w$ to deduce that 
\begin{align}
\label{approx}\nonumber
\int_{B_r}|D(H_{p/2}(Du_\varepsilon))|^2dx&\leq C\int_{B_\rho}(1+|Du_\varepsilon|^p)dx+C\bigg(\int_{B_\rho}|u|^{p}+|Du|^pdx\bigg)^{\frac{q}{p-1}}\bigg(\int_{B_\rho}|k_\varepsilon(x)|^sdx\bigg)^{\frac{p}{s(p-1)}}\\
&+C\bigg(\int_{B_\rho}|u|^{p}+|Du|^pdx\bigg)^{\frac{q}{p-1}}\bigg(\int_{B_\rho}|h_\varepsilon(x)|^\gamma dx\bigg)^{\frac{p}{\gamma(p-1)}},
\end{align}
for every $B_\rho\Subset B_R, \ r<\rho$, with a constant $C$ independent of $\varepsilon$. 
Note that, due to properties of mollifiers, we have

\begin{equation}
\label{ae}
 0<m<a_\varepsilon(x)<M \ \ \text{a.e. }x\in B_R,
\end{equation}where $m$ and $M$ are defined in \eqref{ipoa}, and also
\begin{equation}
\label{ke}
 ||k_\varepsilon(x)||_{L^s(B_R)}\leq C||k(x)||_{L^s(B_R)},
\end{equation}
\begin{equation}
\label{he}
||h_\varepsilon(x)||_{L^\gamma(B_R)}\leq C||h(x)||_{L^\gamma(B_R)}.
\end{equation}
Consequently the norm of the functions $h_\varepsilon$ and $k_\varepsilon$ can be bounded independently of $\varepsilon.$  
Since $u_\varepsilon$ is a weak of solution of \eqref{equ regolare}, then 
\begin{equation}
   \label{intregolare}
   \int_{B_R} \bigl\langle a_\varepsilon(x)H_{p-1}(Du_\varepsilon)+\varepsilon|Du_\varepsilon|^{p-2}Du_\varepsilon,D\varphi\bigr\rangle \textbf{ }dx=\int_{B_R} b_\varepsilon(x,u)\varphi\textbf{ } dx \ \textbf{ }\text{ for all $\varphi \in W^{1,p}_0(B_R)$}.
\end{equation}
Choosing $\varphi=u_\varepsilon-u$ in \eqref{intregolare}, we obtain \noindent
\begin{align*}
0&=\int_{B_R} \bigl\langle a_\varepsilon(x)H_{p-1}(Du_\varepsilon)+\varepsilon|Du_\varepsilon|^{p-2}Du_\varepsilon,Du_\varepsilon-Du\bigr\rangle \textbf{ }dx-\int_{B_R} b_\varepsilon(x,u)(u_\varepsilon-u)\textbf{ } dx\\
   &=\int_{B_R}\bigl\langle a_\varepsilon(x)H_{p-1}(Du_\varepsilon)+\varepsilon|Du_\varepsilon|^{p-2}Du_\varepsilon,Du_\varepsilon\bigr\rangle dx
-\int_{B_R} \bigl\langle a_\varepsilon(x)H_{p-1}(Du_\varepsilon)+\varepsilon|Du_\varepsilon|^{p-2}Du_\varepsilon,Du\bigr\rangle dx\\&-\int_{B_R} b_\varepsilon(x,u)(u_\varepsilon-u)\textbf{ } dx\\ 
&=:A-B-C,
\end{align*}
that yields 
\begin{equation}
\label{dis:app}
    A\leq |B|+|C|.
\end{equation}  
By \eqref{ae} and Lemma \ref{lem:Brasco}, we deduce that
\begin{align}
    \label{lhs}
    A&\geq m\int_{B_R} |H_{p/2}(Du_\varepsilon)|^2dx+\varepsilon\int_{B_R}|Du_\varepsilon|^pdx.
\end{align}
In order to estimate $|B|$, we use again \eqref{ae}, Lemma \ref{lem:Brasco} and Young's inequality as follows
\begin{align}
    \label{rhs1}\nonumber
    B&\leq M\int_{B_R} |H_{p-1}(Du_\varepsilon)||Du| dx+\varepsilon\int_{B_R}|Du_\varepsilon|^{p-1}|Du|dx\\ 
    &\leq \sigma\int_{B_R} |H_{p/2}(Du_\varepsilon)|^2dx+C_\sigma\int_{B_R} (1+|Du|^p)dx+\frac{\varepsilon}{2}\int_{B_R}|Du_\varepsilon|^pdx,
\end{align}
where $\sigma>0$ will be chosen later.
Now we estimate $|C|$ using \eqref{ipb}, H\"older's inequality with exponents $\frac{1}{p^*}, \frac{q}{p^*}$ and $\frac{1}{\gamma}$, that is legitimate by virtue of assumption \eqref{q} and \eqref{s e gam 2}, Sobolev-Poincaré theorem and Sobolev embedding theorem, as follows 
\begin{align}
\label{rhs2}\nonumber
|C|&\leq \int_{B_R}|u_\varepsilon-u||b_\varepsilon(x,u)|dx\leq\int_{B_R} |u_\varepsilon-u|h_\varepsilon(x)|u|^qdx\\\nonumber
&\leq \bigg(\int_{B_R}|u_\varepsilon-u|^{p^*}dx\bigg)^{\frac{1}{p^*}}\bigg(\int_{B_R}h_\varepsilon(x)^\gamma dx\bigg)^{\frac{1}{\gamma}}\bigg(\int_{B_R}|u|^{p^*}dx\bigg)^{\frac{q}{p^*}}\\\nonumber
&\leq \bigg(\int_{B_R}|Du_\varepsilon-Du|^{p}dx\bigg)^{\frac{1}{p}}\bigg(\int_{B_R}h_\varepsilon(x)^\gamma dx\bigg)^{\frac{1}{\gamma}}\bigg(\int_{B_R}|u|^{p}dx+\int_{B_R}|Du|^{p}dx\bigg)^{\frac{q}{p}}\\ \nonumber
&\leq\sigma\bigg(\int_{B_R}|Du_\varepsilon|^p+|Du|^{p}dx\bigg)+C_\sigma\bigg(\int_{B_R}h_\varepsilon(x)^\gamma dx\bigg)^{\frac{p}{\gamma(p-1)}}\bigg(\int_{B_R}|u|^{p}dx+\int_{B_R}|Du|^{p}dx\bigg)^{\frac{q}{p-1}}\\ \nonumber
&\leq \sigma \int_{B_R}(|Du_\varepsilon|-1)_+^p dx +\sigma \int_{B_R}(1+|Du|^p) dx \\ 
&+C_\sigma\bigg(\int_{B_R}h_\varepsilon(x)^\gamma dx\bigg)^{\frac{p}{\gamma(p-1)}}\bigg(\int_{B_R}|u|^{p}dx+\int_{B_R}|Du|^{p}dx\bigg)^{\frac{q}{p-1}},
\end{align} where we used Young's inequality in the fourth line.
Inserting \eqref{lhs}, \eqref{rhs1} and \eqref{rhs2} in \eqref{dis:app} we obtain
\begin{align}
\label{m}
\nonumber
   &m\int_{B_R} |H_{p/2}(Du_\varepsilon)|^2dx+\varepsilon\int_{B_R}|Du_\varepsilon|^pdx\leq 2\sigma\int_{B_R} |H_{p/2}(Du_\varepsilon)|^2dx+C_\sigma\int_{B_R}(1+|Du|^p)dx\\
&+C_\sigma\bigg(\int_{B_R}h_\varepsilon(x)^\gamma dx\bigg)^{\frac{p}{\gamma(p-1)}}\bigg(\int_{B_R}|u|^{p}dx+\int_{B_R}|Du|^{p}dx\bigg)^{\frac{q}{p-1}}+\frac{\varepsilon}{2}\int_{B_R}|Du_\varepsilon|^pdx,
\end{align}
where we used that $|H_{p/2}(Du_\varepsilon)|^2=(|Du_\varepsilon|-1)^p_+.$
Choosing $\sigma=\frac{m}{4}$ we can reabsorb the first integral in the right hand side by the left hand side, and reabsorbing also the last integral in \eqref{m}, we get
\begin{align}
\label{approx2} \nonumber
 \int_{B_R} |H_{p/2}(Du_\varepsilon)|^2dx&+\frac{\varepsilon}{2}\int_{B_R}|Du_\varepsilon|^pdx\leq C\int_{B_R}(1+|Du|^p)dx\\
&+C\bigg(\int_{B_R}h_\varepsilon(x)^\gamma dx\bigg)^{\frac{p}{\gamma(p-1)}}\bigg(\int_{B_R}|u|^{p}dx+\int_{B_R}|Du|^{p}dx\bigg)^{\frac{q}{p-1}},
\end{align}
and by virtue of \eqref{he}, inequality \eqref{approx2} implies that
\begin{align}
\label{c} \nonumber
\int_{B_R}|Du_\varepsilon|^pdx &\leq \int_{B_R}(|Du_\varepsilon|-1)^p_+dx+|B_R|=\int_{B_R}|H_{p/2}(Du_\varepsilon)|^2dx+|B_R|\\ \nonumber
&\leq C\int_{B_R}(1+|Du|^p)dx+|B_R|\\
&+C\bigg(\int_{B_R}h(x)^\gamma dx\bigg)^{\frac{p}{\gamma(p-1)}}\bigg(\int_{B_R}|u|^{p}dx+\int_{B_R}|Du|^{p}dx\bigg)^{\frac{q}{p-1}},
\end{align}
with $C$ independent of $\varepsilon$.
Using \eqref{approx2} in the right hand side of \eqref{approx}, we get
\begin{align}
\label{approx3}\nonumber
\int_{B_\rho}|DH_{p/2}(Du_\varepsilon)|^2dx&\leq C\bigg(\int_{B_{R}}|u|^{p}+|Du|^pdx\bigg)^{\frac{q}{p-1}}\bigg(\int_{B_R}|k_\varepsilon(x)|^sdx\bigg)^{\frac{p}{s(p-1)}}\\ \nonumber
&+C\bigg(\int_{B_{R}}|u|^{p}+|Du|^pdx\bigg)^{\frac{q}{p-1}}\bigg(\int_{B_R}|h_\varepsilon(x)|^\gamma dx\bigg)^{\frac{p}{\gamma(p-1)}}\\ \nonumber
&\leq C||k(x)||_{L^s}^{\frac{p}{p-1}}\bigg(\int_{B_{R}}|u|^{p}+|Du|^pdx\bigg)^{\frac{q}{p-1}}\\
&+C||h(x)||_{L^\gamma}^{\frac{p}{p-1}}\bigg(\int_{B_{R}}|u|^{p}+|Du|^pdx\bigg)^{\frac{q}{p-1}}
\end{align}
where we used \eqref{ke} and \eqref{he}. By \eqref{c} and \eqref{approx3} we have 
\begin{align}
    \label{converge} 
    u_\varepsilon&\rightharpoonup v \text{ in }W^{1,p}\\ 
    H_{p/2}(Du_\varepsilon) &\rightharpoonup w \text{ in } W^{1,2}_{loc}(B_R)\\
     H_{p/2}(Du_\varepsilon) &\longrightarrow w \text{ in } L^{2}_{loc}(B_R),
\end{align}and therefore also a.e. up to a subsequence. By the continuity of the function $H_{p/2}(\xi)$ and the uniqueness of the weak limit, it holds $$w=H_{p/2}(Dv).$$ Hence, passing to the limit as $\varepsilon\to 0$ in \eqref{approx3} and applying Fatou Lemma, we get
\begin{align}
\label{approx4}\nonumber
\int_{B_\rho}|DH_{p/2}(Dv)|^2dx&\leq C||k(x)||_{L^s}^{\frac{p}{p-1}}\bigg(\int_{B_{R}}|u|^{p}+|Du|^pdx\bigg)^{\frac{q}{p-1}}\\
&+C||h(x)||_{L^\gamma}^{\frac{p}{p-1}}\bigg(\int_{B_{R}}|u|^{p}+|Du|^pdx\bigg)^{\frac{q}{p-1}}.
\end{align}
Our next aim is to prove that the function $v\in W^{1,p}(\Omega)$ is a solution of \eqref{equazione nuova} with $u$ in place of $w$, i.e. in weak formulation, that the following identity

\begin{equation}
    \label{verifica2}
    \int_{B_R}\bigg(a(x)(|Dv|-1)^{p-1}_+\frac{Dv}{|Dv|}\bigg)D\varphi \ dx=\int_{B_R}b(x,u)\varphi \ dx, 
\end{equation}
holds $\forall\varphi\in W^{1,p}_0(B_R).$


In order to prove \eqref{verifica2}, we observe that  
\begin{align}\label{verifica3}
\nonumber
&\int_{B_R}\langle a(x)H_{p-1}(Dv), D\varphi\rangle dx=\int_{B_R}a(x)\langle H_{p-1}(Dv),D\varphi\rangle \ dx \\ \nonumber
&+\int_{B_R}a(x)\langle H_{p-1}(Du_\varepsilon), D\varphi \rangle \ dx-\int_{B_R}a(x)\langle H_{p-1}(Du_\varepsilon), D\varphi\rangle \ dx\\ \nonumber
&=\int_{B_R}a(x)\langle H_{p-1}(Dv),D\varphi\rangle \ dx+\int_{B_R}b_\varepsilon(x,u)\varphi \ dx\\\nonumber
&-\int_{B_R}a(x)\langle H_{p-1}(Du_\varepsilon),D\varphi\rangle \ dx\\\nonumber
&=\int_{B_R}a(x)\langle H_{p-1}(Dv)-H_{p-1}(Du_\varepsilon), D\varphi\rangle dx\\ \nonumber
&+\int_{B_R}\bigg(b_\varepsilon(x,u)-b(x,u)\bigg)\varphi \ dx+\int_{B_R}b(x,u)\varphi \ dx \\
&=:J_\varepsilon+JJ_\varepsilon+\int_{B_R}b(x,u)\varphi \ dx,
\end{align}
where, in the second identity we used that $u_\varepsilon$ solves problem  \eqref{equ regolare}. Our aim is to prove that $J_\varepsilon$ and $JJ_\varepsilon$ go to $0$ as $\varepsilon$ goes to $0.$ Indeed, we have
\begin{align}
  \label{j1} \nonumber
  |J_\varepsilon|&\leq M||D\varphi||_{L^p}\bigg(\int_{supp \ D\varphi}|H_{p-1}(Dv)-H_{p-1}(Du_\varepsilon)|^\frac{p}{p-1}dx\bigg)^{\frac{p-1}{p}}\\ \nonumber
  &\leq C||D\varphi||_{L^p}\bigg(\int_{supp \ D\varphi}|H_{p/2}(Dv)-H_{p/2}(Du_\varepsilon)|^{\frac{p}{p-1}}\bigg((|Dv|-1)_++(|Du_\varepsilon|-1)_+\bigg)^{\frac{p}{p-1}\frac{p-2}{2}}dx\bigg)^{\frac{p-1}{p}}\\ \nonumber
&\leq C||D\varphi||_{L^p}\bigg(\int_{supp \ D\varphi}|H_{p/2}(Dv)-H_{p/2}(Du_\varepsilon)|^2dx\bigg)^{\frac{1}{2}}\bigg(\int_{supp \ D\varphi}(|Dv|-1)_++(|Du_\varepsilon|-1)_+\bigg)^pdx\bigg)^{\frac{p-2}{2p}}\\
&\leq C||D\varphi||_{L^p}\bigg(\int_{supp \ D\varphi}|H_{p/2}(Dv)-H_{p/2}(Du_\varepsilon)|^2dx\bigg)^{\frac{1}{2}}\bigg(\int_{B_R}|Du|^p+|Du_\varepsilon|^pdx\bigg)^{\frac{p-2}{2p}},
\end{align}where we used assumption $(i)$ in \eqref{ipoa}, Lemma \ref{eq:BraAmb} with $\epsilon=p-1$ and $\alpha=p/2$ and H\"older's inequality twice. Note that since $H_{p/2}(Du_\varepsilon) \to H_{p/2}(Dv)$ strongly in $L^2_{loc}(B_R)$ and $\int_{B_R}|Du_\varepsilon|^pdx$ is bounded by \eqref{c}, we conclude that $J_\varepsilon \to 0$.\\ To estimate $JJ_\varepsilon$, we use that $|b(x,u)|\leq h(x)|u|^q$ as follows
\begin{align}
  \label{jj} \nonumber
 \int_{B_R}|b(x,u)|dx&\leq \int_{B_R}h(x)|u|^q dx \\ \nonumber
 &\leq \bigg(\int_{B_R}|u|^{p^*} dx\bigg)^{\frac{q}{p^*}}\bigg(\int_{B_R}h(x)^{\frac{p^*}{p^*-q}} dx\bigg)^\frac{p^*-q}{p^*} \\
 &\leq C\bigg(\int_{B_R}|u|^{p^*} dx\bigg)^{\frac{q}{p^*}}\bigg(\int_{B_R}h(x)^{\gamma} dx\bigg)^\frac{1}{\gamma},
\end{align}
where we used H\"older's inequality and that $\frac{p^*}{p^*-q}<\gamma.$
By \eqref{jj} we can conclude that $JJ_\varepsilon \to 0$ as $\varepsilon \to 0$ because $b_\varepsilon(x,u) \to b(x,u)$ strongly in $L^1(B_R).$
To conclude the proof, we have to prove that $H_{p/2}(Du)=H_{p/2}(Dv).$ Since $u$ and $v$ solve \eqref{equazione mia} and \eqref{equazione nuova} respectively, we have
\begin{equation*}
\int_{B_R}a(x)\langle H_{p-1}(Du)-H_{p-1}(Dv), D\varphi\rangle dx=0
\end{equation*}
and, choosing as $\varphi=u-v$, we get
\begin{align}
    \label{fineapprox}
    0\leq m\int_{B_R}|H_{p/2}(Du)-H_{p/2}(Dv)|^2dx &\leq \int_{B_R}a(x)\langle H_{p-1}(Du)-H_{p-1}(Dv), Du-Dv\rangle dx=0,
\end{align}
where we used the ellipticity of $H_{p/2}(\xi)$ given by Lemma \ref{eq:BraAmb} and assumption \eqref{ipoa}. Hence \eqref{fineapprox} proves that $H_{p/2}(Du)=H_{p/2}(Dv)$ and consequently, using this information in $\eqref{approx4}$, we obtain the desired estimate for the function $u.$

\end{document}